\documentclass[11pt]{amsart}

\usepackage{amsmath,amssymb,amsthm,mathtools}
\usepackage{enumitem}
\usepackage{aliascnt}
\usepackage[hidelinks]{hyperref}
\usepackage[nameinlink,capitalise]{cleveref}
\hypersetup{
 pdftitle={Extremal Lin--Lu--Yau Curvature: Graph Density, Girth, and Short Cycles},
 pdfauthor={Qing Xia},
 pdfkeywords={Lin--Lu--Yau curvature, weighted graph, extremal Lin--Lu--Yau curvature, maximum average degree, short cycle, triangle defect, fractional matching}
}

\newtheorem{theorem}{Theorem}[section]
\newaliascnt{proposition}{theorem}
\newtheorem{proposition}[proposition]{Proposition}
\aliascntresetthe{proposition}
\newaliascnt{lemma}{theorem}
\newtheorem{lemma}[lemma]{Lemma}
\aliascntresetthe{lemma}
\newaliascnt{corollary}{theorem}

\aliascntresetthe{corollary}
\theoremstyle{definition}
\newaliascnt{definition}{theorem}
\newtheorem{definition}[definition]{Definition}
\aliascntresetthe{definition}
\newaliascnt{example}{theorem}
\newtheorem{example}[example]{Example}
\aliascntresetthe{example}
\theoremstyle{remark}
\newaliascnt{remark}{theorem}
\newtheorem{remark}[remark]{Remark}
\aliascntresetthe{remark}

\newcommand{\Curv}{\mathrm{Curv}}
\newcommand{\LLY}{\mathrm{LLY}}
\newcommand{\R}{\mathbb{R}}
\newcommand{\mad}{\operatorname{mad}}
\newcommand{\girth}{\operatorname{girth}}
\newcommand{\Kmax}{\kappa_{\max}^{\LLY}}

\title{Extremal Lin--Lu--Yau Curvature: Graph Density, Girth, and Short Cycles}
\author{Qing Xia}
\address{School of Mathematical Sciences\\
University of Science and Technology of China\\
96 Jinzhai Road\\
Hefei 230026, Anhui Province\\
China}
\email{xq0420@mail.ustc.edu.cn}
\date{\today}

\subjclass[2020]{05C12, 05C63, 05C81, 60J10}
\keywords{Lin--Lu--Yau curvature, weighted graph, extremal Lin--Lu--Yau curvature, maximum average degree, short cycle, triangle defect, fractional matching}

\begin{document}

\begin{abstract}
We consider the extremal-curvature problem of optimizing a uniform
discrete-curvature lower bound over positive edge weights, and develop
this problem here for Lin--Lu--Yau curvature. Let $G=(V,E)$ be a finite connected graph, and let $w:E\to(0,\infty)$ be a positive edge weight. In the fixed-combinatorial-distance weighted Lin--Lu--Yau model, write
\[
\kappa_{\LLY}^w(G):=\min_{e\in E}\kappa_{\LLY}^w(e)
\]
and define the extremal Lin--Lu--Yau curvature
\[
\Kmax(G):=\sup_{w>0}\kappa_{\LLY}^w(G).
\]
For graphs of girth at least $6$ we determine this invariant exactly:
\[
\Kmax(G)=\frac{4}{\mad(G)}-2,
\]
where $\mad(G)$ is the maximum average degree. Equivalently,
\[
\Kmax(G)
=\min_{\substack{H\subseteq G\text{ connected}\\E(H)\ne\varnothing}}
\frac{2(1-\beta(H))}{|E(H)|},
\]
where $\beta(H)=|E(H)|-|V(H)|+1$ is the cycle rank of the connected graph $H$. Thus, in the high-girth regime, the invariant is a normalized Euler-characteristic density. We characterize attainment in terms of the classical notion of strict balancedness and show that maximizing sequences concentrate, in a precise normalized-incidence sense, on proper densest cores when the supremum is not attained.

For arbitrary finite graphs we isolate the contribution of short cycles by a nonnegative surplus, which vanishes exactly on edges contained in no cycle of length $3$, $4$, or $5$. For edges contained in no triangle, this surplus is the value of an explicit local fractional matching problem. This yields the sharp hierarchy
\[
\Kmax(G)\le 4-\ell+\frac{\ell-2}{\mad(G)},
\qquad \girth(G)\ge\ell,\quad \ell\in\{3,4,5,6\},
\]
with equality for every finite connected graph when $\ell=6$. At the universal endpoint $\ell=3$, we sharpen the local bound by an explicit triangle-defect term and show that the global bound $1+1/\mad(G)$ is attained precisely by complete graphs. For $\ell=4,5$, attainment is rigid: the graph must be regular and every edge must satisfy a corresponding $C_4$- or $C_5$-perfect-matching condition.

\end{abstract}

\maketitle

\section{Introduction and statement of results}
\label{sec:intro}

Ollivier's coarse Ricci curvature measures how much the one-step probability distributions based at two nearby points move closer together in Wasserstein distance \cite{Ollivier2009}. Lin, Lu, and Yau introduced the high-idleness graph curvature now bearing their names \cite{LinLuYau2011}, and the dependence on the idleness parameter was developed further in \cite{BCLMP2018}. M\"unch and Wojciechowski extended the theory to general graph Laplacians and obtained the limit-free Laplacian formulation that will be used throughout this paper \cite{MunchWojciechowski2019}. More generally, let $\Curv$ be a discrete curvature notion on
positively edge-weighted finite graphs that is invariant under global
rescaling of the edge weights. For each graph $G$, let
$\mathcal S_{\Curv}(G)$ denote the finite set on which the curvature is
evaluated, and write
\[
\kappa_{\Curv}^w(\xi),
\qquad
\xi\in\mathcal S_{\Curv}(G),
\]
for the corresponding curvature values. We call
$\mathcal S_{\Curv}(G)$ the \emph{curvature carrier} of $\Curv$ on
$G$. For example,
\[
\mathcal S_{\LLY}(G)=E
\]
for Lin--Lu--Yau curvature, and
\[
\mathcal S_{\mathrm{BE}}(G)=V
\]
for a vertex-based Bakry--\'Emery curvature model.

Define
\[
\kappa_{\Curv}^w(G)
:=
\min_{\xi\in\mathcal S_{\Curv}(G)}
\kappa_{\Curv}^w(\xi),
\]
and the \emph{extremal $\Curv$-curvature} of $G$ by
\begin{equation}
\label{eq:general-extremal-curvature}
\kappa_{\max}^{\Curv}(G)
:=
\sup_{w:E\to(0,\infty)}
\kappa_{\Curv}^w(G).
\end{equation}
Thus $\kappa_{\max}^{\Curv}(G)$ is the largest uniform curvature lower
bound that can be approached by varying the positive edge weights.
The supremum need not be attained.

The present paper is devoted to the case $\Curv=\LLY$. We work in
the fixed-combinatorial-distance weighted Lin--Lu--Yau model: the
underlying graph distance is always the combinatorial distance, while
the positive edge weight enters the random walk through the weighted
degree. Thus the graph and its metric are fixed, and only the
nearest-neighbor transition probabilities vary with the edge weights.
Accordingly, our problem is to determine and understand
\[
\kappa_{\max}^{\LLY}(G),
\]
the extremal Lin--Lu--Yau curvature of $G$.

Let $G=(V,E)$ be finite and connected, with $E\ne\varnothing$. For $x\in V$, let $N(x)$ denote the set of neighbors of $x$, and let $d_x:=|N(x)|$ denote its combinatorial degree. When the underlying graph needs to be emphasized, we also write $N_G(x)$ in place of $N(x)$. For a positive edge weight $w$, define the weighted degree by
\[
d_x^w:=\sum_{z\sim x}w_{xz}.
\]
For an edge $e\in E$, define its \emph{local girth} by
\[
\girth_G(e)
:=\min\{|E(C)|: C\text{ is a cycle of }G\text{ containing }e\},
\]
with the convention that $\girth_G(e)=\infty$ if $e$ lies on no cycle. Thus
\[
\girth(G)=\min_{e\in E}\girth_G(e),
\]
with $\girth(G)=\infty$ when $G$ is a tree. In the Lin--Lu--Yau case,
\[
\mathcal S_{\LLY}(G)=E,
\qquad
\kappa_{\LLY}^w(G)
=
\min_{e\in E}\kappa_{\LLY}^w(e).
\]
Specializing \eqref{eq:general-extremal-curvature}, we write
\begin{equation}
\label{eq:kmax-intro}
\kappa_{\max}^{\LLY}(G)
=
\sup_{w:E\to(0,\infty)}
\kappa_{\LLY}^w(G).
\end{equation}
The Lin--Lu--Yau curvature is invariant under global rescaling of $w$.
Nevertheless, the supremum in \eqref{eq:kmax-intro} need not be
attained by a positive weight: after fixing a normalization of the
weights, a maximizing sequence may approach the boundary of the
positive weight simplex. Maximizing weights and maximizing sequences
are defined formally in \cref{def:kmax}.

Our first main result gives a complete answer when short cycles are absent. Recall that the maximum average degree of $G$, a standard graph-theoretic invariant, is defined by
\[
\mad(G):=\max_{\varnothing\ne H\subseteq G}\frac{2|E(H)|}{|V(H)|}.
\]

\begin{theorem}[High-girth formula]
\label{thm:intro-high-girth}
Let $G$ be a finite connected graph with $E\ne\varnothing$ and $\girth(G)\ge6$. Then
\begin{equation}
\label{eq:intro-high-girth}
\Kmax(G)=\frac4{\mad(G)}-2.
\end{equation}
Equivalently,
\begin{equation}
\label{eq:intro-euler}
\Kmax(G)
=\min_{\substack{H\subseteq G\text{ connected}\\E(H)\ne\varnothing}}
\frac{2(1-\beta(H))}{|E(H)|},
\end{equation}
where $\beta(H)=|E(H)|-|V(H)|+1$ is the cycle rank of the connected graph $H$.
\end{theorem}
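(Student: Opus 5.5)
The plan is to compute $\kappa_{\LLY}^w(e)$ explicitly on high-girth graphs, which turns the theorem into an extremal problem on normalized incidences, and then prove matching upper and lower bounds for that problem. The curvature step comes first. For an edge $e=xy$ lying on no cycle of length $3$, $4$ or $5$, the balls $N(x)\setminus\{y\}$ and $N(y)\setminus\{x\}$ are disjoint, and any two of their points are at distance $3$. I will use the limit-free Laplacian formulation of M\"unch--Wojciechowski with $1$-Lipschitz test functions. Testing against $f(z)=\mathrm{dist}(z,\{x\})$-type functions gives one bound, and an explicit transport plan gives the reverse. Together they should yield
\[
\kappa_{\LLY}^w(xy)=2w_{xy}\Bigl(\frac1{d_x^w}+\frac1{d_y^w}\Bigr)-2 .
\]
For unweighted trees this is the known value $2/d_x+2/d_y-2$. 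Writing $q_{e,x}:=w_e/d_x^w$, we have $\sum_{e\ni x}q_{e,x}=1$ at every vertex. The claim therefore becomes $\sup_w\min_{e=xy}(q_{e,x}+q_{e,y})=2/\mad(G)$.

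\emph{Upper bound.} Take $H$ attaining $\mad(G)=2|E(H)|/|V(H)|$. Averaging over $E(H)$ gives
\[
\min_{e\in E(H)}(q_{e,x}+q_{e,y})\le\frac1{|E(H)|}\sum_{x\in V(H)}\ \sum_{e\in E(H),\,e\ni x}q_{e,x}\le\frac{|V(H)|}{|E(H)|}=\frac2{\mad(G)},
\]
so $\Kmax(G)\le 4/\mad(G)-2$.

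\emph{Lower bound.} This is the main obstacle: we must realize, up to $\varepsilon$, a simultaneous lower bound $q_{e,x}+q_{e,y}\ge 2/\mad(G)$ on every edge, and the supremum may fail to be attained. I will proceed by the density (core) decomposition $V_1\subset V_1\cup V_2\subset\cdots$. Here $V_1$ spans the densest subgraph, each subsequent layer is densest relative to the previous ones, and the layer densities decrease.
\begin{enumerate}[label=(\roman*)]
\item On the densest core, Hakimi's theorem gives a fractional orientation $\alpha_e^x+\alpha_e^y=1$ with every vertex load equal to $\mad(G)/2$, which is possible because the core is balanced. I then seek $w$ with $q_{e,x}=\alpha_e^x\cdot 2/\mad(G)$ exactly. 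This is a matrix-scaling (Sinkhorn-type) fixed-point problem: $w_e$ must equal $\tfrac2{\mad}\alpha_e^x d_x^w$ at both ends. I will solve it by alternating normalization, or by Brouwer's theorem on the weight simplex; if positivity fails I will pass to a limit of near-solutions.
\item On the outer layers, which have smaller density and hence smaller required loads, I will repeat the construction. Weights on each outer layer are scaled by $\varepsilon^k$, so that vertices of inner layers see negligible mass from outer edges while boundary edges retain their share. Letting $\varepsilon\to0$ yields a maximizing sequence with $\min_e\kappa^w_{\LLY}(e)\to 4/\mad(G)-2$. This degeneration is exactly where non-attainment and concentration on proper cores arise.
\end{enumerate}

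\emph{Equivalence of the two formulas.} For connected $H$,
\[
\frac{2(1-\beta(H))}{|E(H)|}=\frac{2|V(H)|}{|E(H)|}-2 .
\]
A densest subgraph can be taken connected, since one of its components is at least as dense. Minimizing over connected $H$ with $E(H)\neq\varnothing$ therefore gives $4/\mad(G)-2$, which proves \eqref{eq:intro-euler}.
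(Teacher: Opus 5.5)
Your curvature formula, the averaging upper bound, and the reduction to connected densest subgraphs all agree with the paper. One small correction: the sharp test function is $f=-1$ on $N(x)\setminus\{y\}$ and $f=2$ on $N(y)\setminus\{x\}$, extended by McShane. The function $f=d(\cdot,x)$ itself only gives $\kappa_{\LLY}^w(xy)\le 2w_{xy}/d_y^w$. The reverse inequality is just the termwise bound $f(u)\ge-1$, $f(v)\le 2$, so no transport plan is needed.

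The genuine gap is in step (i) of the lower bound. Prescribing $q_{e,x}=\frac{2}{\mad(G)}\alpha_e^x$ \emph{entrywise} for an orientation supplied by Hakimi's theorem is overdetermined. Every positive weight satisfies $q^w_{e,x}/q^w_{e,y}=d^w_y/d^w_x$, so the product of these ratios around every cycle must equal $1$, and a general fractional orientation violates this.

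Already the hexagon with edges $e_i=x_ix_{i+1}$ shows this. Every choice $\alpha_{e_i}^{x_i}\equiv s\in[0,1]$ gives all loads equal to $1$. But realizability forces $w_{e_i}/w_{e_{i-1}}=s/(1-s)$ for all $i$, hence $s=1/2$. The orientation that the integral Hakimi theorem actually produces is the directed cycle $s=1$. It is not even a limit of realizable arrays, since it would need $w_{e_{i-1}}/w_{e_i}\to0$ for every $i$ while the product of these ratios is identically $1$. So alternating normalization, Brouwer, or passing to near-solutions cannot repair the step. The obstruction is not positivity but the multiplicative structure $q_{e,x}=a_xb_e$ that any weight imposes.

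The missing idea is to prescribe only the \emph{margins}, namely row sums $1$ at vertices and edge sums $c_e$. Among all incidence arrays with these margins, you then select one of product form $a_xb_e$. Setting $w_e:=b_e$ forces $a_x=1/d_x^w$ and $q_e^w=c_e$. Such a product-form array exists whenever some \emph{strictly positive} array has the same margins; this is the Sinkhorn--Menon--Brualdi scaling theorem. The paper proves exactly this by maximizing entropy over the transportation polytope and reading off $s_{x,e}=a_xb_e$ from the Lagrange conditions (\cref{rem:entropy-factorization}).

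Strict positivity needs slack, so the paper fixes $\lambda<2/\mad(G)$ and works on all of $G$ at once. A max-flow argument, which is a fractional Hakimi argument with capacity $\lambda'\in(\lambda,2/\mad(G))$, gives a row-stochastic incidence array with edge sums at least $\lambda'$. A small convex perturbation makes it strictly positive, and scaling then yields a positive $w$ with $\min_e q_e^w>\lambda$. This removes the need for your core decomposition and $\varepsilon^k$ layering entirely, and non-attainment is absorbed by letting $\lambda\uparrow 2/\mad(G)$. Some slack argument of this kind is unavoidable: on a densest core that is not strictly balanced, exact edge sums $2/\mad(G)$ cannot be realized by any positive weight, by \cref{lem:edge-set-q} applied to a proper densest sub-edge-set.
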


The theorem has an immediate topological consequence. Among finite connected graphs of girth at least $6$, the sign of $\Kmax$ distinguishes trees, unicyclic graphs (equivalently, connected graphs with cycle rank $1$), and graphs containing at least two independent cycles:
\[
\Kmax(G)>0\iff G\text{ is a tree},
\]
\[
\Kmax(G)=0\iff \beta(G)=1,
\]
and $\Kmax(G)<0$ whenever $\beta(G)\ge2$. In particular, every finite tree $T$ satisfies
\begin{equation}
\label{eq:intro-tree}
\Kmax(T)=\frac{2}{|E(T)|}.
\end{equation}

The extremal problem also distinguishes attainment from non-attainment. Recall that a finite graph $G$ is \emph{strictly balanced} if
\[
\frac{|E(H)|}{|V(H)|}
<\frac{|E|}{|V|}
\qquad
\text{for every nonempty proper subgraph }H\subsetneq G;
\]
this is classical terminology from random graph theory; see, for example, \cite[Section~4.1]{Bollobas2001}. Equivalently, it is enough to test induced subgraphs on proper nonempty vertex sets. Lin and Liu proved, in the same fixed-distance weighted model, that a positive constant-curvature weight on a finite graph of girth at least $6$ exists exactly under this strict balancedness condition, and that a realizing weight is unique up to global scaling \cite[Theorem~3.1 and Proposition~3.2]{LinLiu2026}. Combined with \cref{thm:intro-high-girth}, this gives the following variational interpretation.

\begin{theorem}[Attainment criterion]
\label{thm:intro-attainment}
Let $G$ be finite, connected, and of girth at least $6$. Then $\Kmax(G)$ is attained by a positive weight if and only if $G$ is strictly balanced. In that case any weight attaining $\Kmax(G)$ is unique up to global scaling and has constant Lin--Lu--Yau curvature equal to $\Kmax(G)$.
\end{theorem}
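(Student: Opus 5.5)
The plan is to combine \cref{thm:intro-high-girth} with the Lin--Liu existence and uniqueness result \cite[Theorem~3.1 and Proposition~3.2]{LinLiu2026}. The link between the two is the high-girth curvature formula behind \cref{thm:intro-high-girth}. When $\girth(G)\ge6$, no edge $xy$ lies on a cycle of length $3$, $4$ or $5$. So the transport plan that moves mass along $y$--$x$--$z$ paths is optimal, and the Lin--Lu--Yau curvature is an explicit local expression in the weights:
\[
\kappa^w_{\LLY}(xy)=w_{xy}\Bigl(\frac1{d^w_x}+\frac1{d^w_y}\Bigr)-\Bigl(1-\frac{w_{xy}}{d^w_x}\Bigr)-\Bigl(1-\frac{w_{xy}}{d^w_y}\Bigr)=2w_{xy}\Bigl(\frac1{d^w_x}+\frac1{d^w_y}\Bigr)-2.
\]
(If the paper's normalization differs, I would adjust constants, keeping this form.) Summing against suitable incidence weights then gives the bound $\Kmax\le 4/\mad-2$. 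The argument is an averaging over a densest subgraph $H$: restrict to $E(H)$, weight each edge term by $1$, and use $\sum_{xy\in E(H)} w_{xy}(1/d^w_x+1/d^w_y)\le \sum_{x\in V(H)}\sum_{y\sim x,\,y\in H}w_{xy}/d^w_x\le |V(H)|$. This yields $\min_e\kappa^w(e)\le 2|V(H)|/|E(H)|-2=4/\mad-2$.

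\textbf{If $G$ is strictly balanced.} Then $\mad(G)=2|E|/|V|$. Lin--Liu provide a positive weight $w$ with constant curvature $\kappa$. Summing the curvature formula over all edges, the quantity $\sum_{xy\in E}w_{xy}(1/d^w_x+1/d^w_y)$ equals exactly $|V|$, because each vertex contributes $\sum_{y\sim x}w_{xy}/d^w_x=1$. Hence $|E|\kappa=2|V|-2|E|$, so $\kappa=4/\mad(G)-2=\Kmax(G)$, and the supremum is attained.

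\textbf{Conversely, suppose $w$ attains $\Kmax(G)$.} Run the averaging argument with $H=G$. The inequality $\sum_e \kappa^w(e)\le 2|V|-2|E|$ is an equality, and $\min_e\kappa^w(e)=\Kmax=4/\mad-2\le 2|V|/|E|-2$, which is the average. So every edge has curvature at least the average, hence all edges are equal and $|E|/|V|=\mad/2$. Thus $w$ has constant curvature, and Lin--Liu then gives that $G$ is strictly balanced and that $w$ is unique up to scaling. To make this self-contained rather than relying on Lin--Liu's "only if", I would argue directly, and this is the step I expect to be the main obstacle. If a proper subgraph $H$ also had $|E(H)|/|V(H)|=|E|/|V|$, apply the averaging over $E(H)$. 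The vertex sums there are strict for any vertex of $H$ having a $G$-neighbor outside $H$, since positive weights leave the inner sum below $1$. Such a vertex exists because $G$ is connected and $H$ is proper; if $H$ spans all vertices, the same holds because some edge is missing. This gives some edge of $H$ with curvature strictly below $4/\mad-2$, a contradiction. Care is needed to reduce to induced, connected $H$; I would handle this using the remark that induced subgraphs on proper vertex sets suffice.

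Uniqueness up to scaling and the constant-curvature conclusion then follow from the equality analysis together with \cite[Proposition~3.2]{LinLiu2026}.
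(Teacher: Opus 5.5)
Your forward direction (strictly balanced $\Rightarrow$ attained) is fine and essentially matches the paper. The converse has a genuine gap at the sentence ``So every edge has curvature at least the average.'' From attainment you know $\min_e\kappa^w_{\LLY}(e)=4/\mad(G)-2$, and you note that this is at most $2|V|/|E|-2$, the average over $E$. But a minimum bounded above by the average says nothing about the other edges.

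To force constant curvature you would need the reverse inequality $4/\mad(G)-2\ge 2|V|/|E|-2$, i.e.\ $\mad(G)=2|E|/|V|$. That is part of what you are trying to prove. Averaging over $E$ only gives $\min_e\kappa^w_{\LLY}(e)\le 2|V|/|E|-2$, which is weaker than $4/\mad(G)-2$ unless $G$ is already densest. So attaining $4/\mad(G)-2$ forces nothing about that average.

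Your self-contained fallback has the same circularity. It only excludes proper subgraphs with $|E(H)|/|V(H)|=|E|/|V|$. Its contradiction (an edge of $H$ with curvature below $4/\mad(G)-2$) needs $|V(H)|/|E(H)|=2/\mad(G)$, i.e.\ it presupposes that $G$ is densest. As written, a graph containing a proper subgraph strictly denser than $G$ is never ruled out.

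The repair is to average over a densest edge set $F$, with $|V(F)|/|F|=2/\mad(G)$, instead of over $E$.
\begin{itemize}
\item Attainment and $\kappa^w_{\LLY}(e)=2q^w_e-2$ give $q^w_e\ge 2/\mad(G)$ on every edge, so $\sum_{e\in F}q^w_e\ge |V(F)|$.
\item \cref{lem:edge-set-q} gives $\sum_{e\in F}q^w_e=|V(F)|-L_{H_F}(w)$.
\item Hence $L_{H_F}(w)=0$, and positivity of $w$ plus connectedness force $F=E$.
\end{itemize}
Since $F$ was an arbitrary densest edge set, $E$ is the unique one, which is strict balancedness. No appeal to Lin--Liu's ``only if'' is needed; this is exactly the paper's argument.

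Once $\mad(G)=2|E|/|V|$ is known, your ``minimum versus average'' reasoning becomes valid for every maximizing weight: the minimum now equals the average, so every edge has curvature exactly $\Kmax(G)$. Then \cite[Proposition~3.2]{LinLiu2026} gives uniqueness up to scaling, as in the paper.
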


Under the assumptions of \cref{thm:intro-attainment}, if a subgraph $H\subsetneq G$ satisfies \[ \frac{2|E(H)|}{|V(H)|}=\mad(G), \] then no positive weight attains $\Kmax(G)$. Nevertheless, along every maximizing sequence the normalized leakage from $H$ tends to zero, and the curvature on every edge of $H$ converges to $\Kmax(G)$; see \cref{prop:core-concentration}. Here the normalized leakage $L_H(w)$ is defined in \eqref{eq:leakage}.

The second part of the paper studies the effect of short cycles and yields a four-level girth-density hierarchy.

\begin{theorem}[Girth-density hierarchy]
\label{thm:intro-girth-density}
Let $G$ be finite and connected, and let $\ell\in\{3,4,5,6\}$. If $\girth(G)\ge\ell$, then
\begin{equation}
\label{eq:intro-girth-density}
\Kmax(G)
\le
4-\ell+\frac{\ell-2}{\mad(G)}.
\end{equation}
For $\ell=6$ equality always holds.
\end{theorem}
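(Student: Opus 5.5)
The plan is to reduce the global inequality \eqref{eq:intro-girth-density} to a \emph{local} edgewise upper bound, and then average that bound over a densest subgraph. Fix a positive weight $w$ and write $p_{xy}:=w_{xy}/d_x^w$ for the transition probability of the weighted walk. The local statement I will aim for is: if $e=xy$ satisfies $\girth_G(e)\ge\ell$ with $\ell\in\{3,4,5,6\}$, then
\[
\kappa_{\LLY}^w(xy)\le 4-\ell+\frac{\ell-2}{2}\,\bigl(p_{xy}+p_{yx}\bigr).
\]
For $\ell=6$ this is the tree-like value $2p_{xy}+2p_{yx}-2$, which is the weighted analogue of $2/d_x+2/d_y-2$.

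To prove the local bound, I will use the limit-free formulation of M\"unch--Wojciechowski: $\kappa^w_{\LLY}(xy)$ is the infimum of $\nabla_{xy}\Delta f$ over $1$-Lipschitz $f$ with $f(x)-f(y)=1$. It therefore suffices to exhibit one explicit test function. I will take $f$ equal to $+1$ on $x$, $0$ on $y$, $+1$ on the neighbours of $x$ other than $y$, and $-1$ on the neighbours of $y$ other than $x$, in the spirit of the tree computation. The absence of cycles of length $3$, $4$, $5$ through $e$ guarantees that these values are $1$-Lipschitz for $\ell=6$, and then $\nabla_{xy}\Delta f$ evaluates exactly to $2p_{xy}+2p_{yx}-2$. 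For smaller $\ell$, short cycles force some neighbours of $x$ and of $y$ to lie at distance $\le1$ from one another, so the values must be modified there.
\begin{itemize}
\item For $\ell=5$ and $\ell=4$, I will shrink the gap on the endpoints of pentagons and quadrilaterals, respectively. The worst case yields a cost of $1$ or $2$ per unit of escape mass.
\item For $\ell=3$, I will assign common neighbours the value $1/2$.
\end{itemize}
In each case a direct bookkeeping of the Laplacian terms gives the displayed linear bound. Because the neighbours of $x$ other than $y$ carry total mass $1-p_{xy}$, it is convenient to organize the computation as an interpolation: each escaping unit of mass contributes between $-1$ and $+1$ depending on $\ell$.

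Next comes the averaging step. Let $H\subseteq G$ attain $\mad(G)=2|E(H)|/|V(H)|$. Summing the local bound over $e\in E(H)$ and using
\[
\sum_{xy\in E(H)}\bigl(p_{xy}+p_{yx}\bigr)=\sum_{x\in V(H)}\ \sum_{y\in N_H(x)}\frac{w_{xy}}{d_x^w}\le |V(H)|,
\]
we obtain
\[
\min_{e\in E(H)}\kappa^w_{\LLY}(e)\le 4-\ell+\frac{\ell-2}{2}\cdot\frac{|V(H)|}{|E(H)|}=4-\ell+\frac{\ell-2}{\mad(G)}.
\]
This holds for every $w$, so taking the supremum over $w$ proves \eqref{eq:intro-girth-density}. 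For $\ell=6$, the right-hand side equals $4/\mad(G)-2$, and \cref{thm:intro-high-girth} gives equality.

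I expect the main obstacle to be the local bound for $\ell=4$ and $\ell=5$. There the test function must remain $1$-Lipschitz across $4$- and $5$-cycles while losing at most the claimed amount, uniformly in the weights. If the naive modification fails, I will first try to bound the optimal transport cost directly via a fractional matching between $N(x)\setminus\{y\}$ and $N(y)\setminus\{x\}$, in which matched pairs sit at distance $1$ (for $C_4$) or $2$ (for $C_5$). The worst-case matching value then produces exactly the surplus $(6-\ell)(1-p_{xy}-p_{yx})/2$, which rearranges to the stated bound.
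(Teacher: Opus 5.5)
Your architecture matches the paper's. You prove an edgewise bound $\kappa_{\LLY}^w(e)\le 4-\ell+\frac{\ell-2}{2}q_e^w$ from one test function in the M\"unch--Wojciechowski dual, then average over a densest subgraph using $\sum_{e\in E(H)}q_e^w\le |V(H)|$. The averaging step is fine. The gap is that the local bound, which carries all the content, is never established, and the concrete computations you offer for it are wrong.

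\textbf{The $\ell=6$ test function.} Your function has $f=1$ at $x$ and on $N(x)\setminus\{y\}$, $f=0$ at $y$, and $f=-1$ on $N(y)\setminus\{x\}$. It does not evaluate to $2p_{xy}+2p_{yx}-2$. In the orientation consistent with $f(x)-f(y)=1$, namely $\Delta_wf(y)-\Delta_wf(x)$, it gives $p_{xy}+2p_{yx}-1$, which exceeds the target by $1-p_{xy}$. It leaves a gap of only $2$ between the two neighbour sets; the tree computation needs the full gap $3$, i.e.\ $f=2$ on $N(x)\setminus\{y\}$.

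\textbf{The cases $\ell=3,4,5$.} For $\ell=4,5$ you only gesture at modifying values ``on the endpoints'' of short cycles. For $\ell=3$ you fix only the common neighbours. Completing that with tree values elsewhere would violate the Lipschitz condition along $4$-cycles and between common and non-common neighbours.

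\textbf{The fallback.} The fractional-matching surplus is at most $(6-\ell)\min\{1-p_{xy},1-p_{yx}\}\le\frac{6-\ell}{2}(2-q_e^w)$, not $\frac{6-\ell}{2}(1-q_e^w)$. Your version would give $\kappa_{\LLY}^w(e)\le q_e^w-1$ at $\ell=4$. That is already false for $C_4$ with unit weights, where $\kappa_{\LLY}(e)=1=q_e$.

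The missing idea is that you do not need selective modification at short-cycle endpoints: one uniform, symmetric test function handles every $\ell$. Use the paper's orientation: normalize $f(x)=0$, $f(y)=1$, with objective $\Delta_wf(x)-\Delta_wf(y)$. For $\ell\in\{4,5,6\}$, set $f\equiv\frac{4-\ell}{2}$ on $N(x)\setminus\{y\}$ and $f\equiv\frac{\ell-2}{2}$ on $N(y)\setminus\{x\}$.
\begin{itemize}
\item The gap between the two sides is exactly $\ell-3$.
\item $\girth_G(e)\ge\ell$ gives $d(u,v)\ge\ell-3$ for opposite neighbours, so the assignment is $1$-Lipschitz and extends by McShane.
\item Its value is $q_e^w+\frac{4-\ell}{2}(2-q_e^w)=4-\ell+\frac{\ell-2}{2}q_e^w$.
\end{itemize}
This is the precise form of your ``interpolation'' remark: each unit of escaping mass contributes $\frac{4-\ell}{2}$.

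For $\ell=3$, take $f=\frac12$ on common neighbours, $f=0$ on the other neighbours of $x$, and $f=1$ on the other neighbours of $y$. All values lie in $[0,1]$, so the Lipschitz condition is automatic. The value is $q_e^w$ plus half the transition mass from $x$ and $y$ into common neighbours, hence at most $1+\frac12q_e^w$.
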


The endpoint $\ell=3$ applies to every simple graph and admits a sharper error analysis. For an edge $e=xy$ put
\[
q_e^w:=\frac{w_{xy}}{d_x^w}+\frac{w_{xy}}{d_y^w},
\]
and we introduce a normalized \emph{triangle defect} $\eta_e(w)$ measuring the transition mass at $x$ and $y$ carried by neighbors that do not complete a triangle with $e$. We prove
\[
\kappa_{\LLY}^w(e)
\le
1+\frac12 q_e^w-\frac12\eta_e(w),
\]
which refines the universal bound $\Kmax(G)\le1+1/\mad(G)$. Moreover, the latter bound is attained by a positive weight if and only if $G$ is complete; for $K_n$ one has $\Kmax(K_n)=n/(n-1)$ and every maximizing weight is constant up to scaling. See \cref{thm:triangle-defect,thm:girth3-rigidity}.

For $\ell=4$ and $5$ there is a unified sharp equality theory: equality in \eqref{eq:intro-girth-density} is attained precisely by regular graphs for which the two sides of every edge admit a perfect matching through cycles of length $\ell$. In both cases every maximizing weight is constant up to scale; see \cref{thm:girth45-rigidity}.

The paper is organized as follows. Section~\ref{sec:prelim} introduces the general extremal-curvature
construction under edge reweighting and then specializes it to the
weighted Lin--Lu--Yau setting. Section~\ref{sec:high-girth} proves \cref{thm:intro-high-girth,thm:intro-attainment}. Section~\ref{sec:short-cycle} proves \cref{thm:intro-girth-density}. Section~\ref{sec:equality} establishes the sharp equality and rigidity results for girth $3$, $4$, and $5$.

\section{Weighted Lin--Lu--Yau curvature and the variational problem}
\label{sec:prelim}

Throughout, graphs are finite, simple, and connected unless stated otherwise. We retain the notation $N(x)$, $d_x=|N(x)|$, and $\girth_G(e)$ introduced above. The combinatorial distance is denoted by $d$, and the weighted degree is
\[
d_x^w:=\sum_{z\sim x}w_{xz}.
\]
We write
\[
\delta(G):=\min_{x\in V}d_x.
\]
For $\alpha\in[0,1]$, define
\begin{equation}
\label{eq:mu}
\mu_x^{\alpha,w}(z)=
\begin{cases}
\alpha, & z=x,\\[2mm]
(1-\alpha)\dfrac{w_{xz}}{d_x^w}, & z\sim x,\\[3mm]
0, & \text{otherwise}.
\end{cases}
\end{equation}
For probability measures $\mu,\nu$ on $V$, let $W_1(\mu,\nu)$ be the $1$-Wasserstein distance with respect to the combinatorial distance. For adjacent vertices $x\sim y$, put
\[
\kappa_\alpha^w(x,y)
:=1-W_1(\mu_x^{\alpha,w},\mu_y^{\alpha,w})
\]
and
\begin{equation}
\label{eq:LLY}
\kappa_{\LLY}^w(x,y)
:=\lim_{\alpha\to1^-}\frac{\kappa_\alpha^w(x,y)}{1-\alpha}.
\end{equation}
This is the fixed-combinatorial-distance weighted model used below. The basic idleness theory originates in \cite{LinLuYau2011,BCLMP2018}.

For a function $f:V\to\R$, write
\[
\operatorname{Lip}(1)
:=\{f:|f(u)-f(v)|\le d(u,v)\text{ for all }u,v\in V\}
\]
and
\[
\Delta_w f(x)
:=\frac1{d_x^w}\sum_{z\sim x}w_{xz}\bigl(f(z)-f(x)\bigr).
\]
We shall use the following limit-free formulation of the curvature, due to M\"unch and Wojciechowski \cite{MunchWojciechowski2019}: for every adjacent pair $x\sim y$,
\begin{equation}
\label{eq:dual-LLY}
\kappa_{\LLY}^w(x,y)
=\inf\left\{
\Delta_w f(x)-\Delta_w f(y):
 f\in\operatorname{Lip}(1),\ f(y)-f(x)=1
\right\}.
\end{equation}
Since $V$ is finite, after normalizing $f(x)=0$ the admissible set is compact, so the infimum in \eqref{eq:dual-LLY} is attained. We shall also use the McShane extension theorem \cite{McShane1934}: every real-valued $1$-Lipschitz function defined on a subset of $(V,d)$ extends to a $1$-Lipschitz function on all of $V$.

We shall use the scale invariance of the weighted curvature: for every $c>0$,
\[
\kappa_{\LLY}^{cw}(e)=\kappa_{\LLY}^w(e)
\qquad(e\in E),
\]
because the transition ratios $w_{xy}/d_x^w$ are unchanged by global scaling.

\subsection{Maximizing weights and maximizing sequences}

\begin{definition}\label{def:kmax}
Recall the extremal-curvature construction introduced in
\eqref{eq:general-extremal-curvature}. A positive edge weight $w$ is called a \emph{maximizing weight} if
\[
\kappa_{\Curv}^w(G)
=
\kappa_{\max}^{\Curv}(G).
\]
A sequence of positive edge weights
$\{w^{(j)}\}_{j\ge1}$ is called a \emph{maximizing sequence} if
\[
\kappa_{\Curv}^{w^{(j)}}(G)
\longrightarrow
\kappa_{\max}^{\Curv}(G)
\qquad\text{as }j\to\infty.
\]

For Lin--Lu--Yau curvature,
\[
\mathcal S_{\LLY}(G)=E.
\]
Throughout the remainder of the paper we take $\Curv=\LLY$.
\end{definition}

\section{The exact high-girth theory}
\label{sec:high-girth}

For $e=xy$ put
\begin{equation}
\label{eq:q}
q_e^w
:=\frac{w_{xy}}{d_x^w}+\frac{w_{xy}}{d_y^w}.
\end{equation}
Lin and Liu state the following formula under the global assumption $\girth(G)\ge6$ \cite[Eq.~(8)]{LinLiu2026}. Their argument is edge-local, so the same formula holds whenever $e=xy$ satisfies $\girth_G(e)\ge6$:
\begin{equation}
\label{eq:local-high-girth}
\kappa_{\LLY}^w(x,y)=2q_{xy}^w-2.
\end{equation}

For an edge set $F\subseteq E$, let $V(F)$ denote the set of vertices incident with at least one edge of $F$.

For a finite nonempty subgraph $H\subseteq G$, define its normalized \emph{leakage} by
\begin{equation}
\label{eq:leakage}
L_H(w)
:=\sum_{x\in V(H)}
\sum_{\substack{e\ni x\\e\notin E(H)}}
\frac{w_e}{d_x^w}.
\end{equation}
Thus $L_H(w)$ measures the total normalized incidence mass, based at vertices of $H$, that is carried by edges not belonging to $H$. Notice that this definition depends on the subgraph itself, not only on its vertex set; in particular, edges of $G[V(H)]\setminus E(H)$ also contribute to $L_H(w)$.

The following exact identity will be used repeatedly.

\begin{lemma}[Edge-set leakage identity]
\label{lem:edge-set-q}
Let $G$ be a finite graph, let $w$ be a positive edge weight, and let $F\subseteq E$ be nonempty. Put
\[
H_F:=(V(F),F).
\]
Then
\begin{equation}
\label{eq:edge-set-q}
\sum_{e\in F}q_e^w
=|V(F)|-L_{H_F}(w)
\le |V(F)|.
\end{equation}
If $G$ is connected, equality in the last inequality holds if and only if $F=E$.
\end{lemma}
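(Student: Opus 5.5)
The identity follows from a double-counting argument: regroup the sum of the $q_e^w$ by endpoint rather than by edge. By definition \eqref{eq:q}, each edge $e=xy\in F$ contributes $w_{xy}/d_x^w$ at its endpoint $x$ and $w_{xy}/d_y^w$ at its endpoint $y$. Grouping by vertex gives
\[
\sum_{e\in F}q_e^w=\sum_{x\in V(F)}\sum_{\substack{e\ni x\\ e\in F}}\frac{w_e}{d_x^w}.
\]
Only vertices of $V(F)$ appear, since every endpoint of an edge of $F$ lies in $V(F)$ by definition.

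For each fixed $x\in V(F)$, the full sum $\sum_{e\ni x} w_e/d_x^w$ equals $1$, because $d_x^w$ is exactly the total weight at $x$; here $d_x^w>0$ since $x$ has at least one incident edge and $w$ is positive. Splitting this sum into the edges in $F$ and those not in $F$, the inner sum above equals
\[
1-\sum_{\substack{e\ni x\\ e\notin F}}\frac{w_e}{d_x^w}.
\]
Summing over $x\in V(F)$ gives $|V(F)|$ minus the double sum in \eqref{eq:leakage} for $H=H_F$, whose edge set is $F$. This yields $\sum_{e\in F}q_e^w=|V(F)|-L_{H_F}(w)$. The inequality $\sum_{e\in F}q_e^w\le|V(F)|$ follows because every term of $L_{H_F}(w)$ is nonnegative.

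For the equality case, equality holds if and only if $L_{H_F}(w)=0$. Since all weights are strictly positive, this happens exactly when no edge outside $F$ is incident with a vertex of $V(F)$. If $F=E$ this holds trivially.

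Conversely, suppose $F\ne E$ and $G$ is connected. Pick an edge $e'\notin F$, and a vertex $u\in V(F)$, which exists because $F\ne\varnothing$. Take a path from $u$ to an endpoint of $e'$ and append $e'$ to it. Walking along this path, which starts in $V(F)$ and ends with an edge not in $F$, there is a first edge not in $F$. Its starting vertex is either $u$ or the endpoint of an edge of $F$, so it lies in $V(F)$. That edge therefore contributes a strictly positive term to $L_{H_F}(w)$, so equality fails.

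No step is a genuine obstacle. The only care needed is in the connectivity argument, to guarantee that some edge outside $F$ actually touches $V(F)$, and in noting that strict positivity of $w$ is what turns $L_{H_F}(w)=0$ into a combinatorial condition.
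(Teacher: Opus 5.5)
Your proof is correct and follows the paper's argument. You regroup $\sum_{e\in F}q_e^w$ by incidences, use that the normalized incidence masses at each vertex of $V(F)$ sum to $1$, identify the remainder as $L_{H_F}(w)$, and use positivity of $w$ to reduce $L_{H_F}(w)=0$ to the combinatorial condition that no edge outside $F$ touches $V(F)$; your first-edge-outside-$F$ walk argument simply spells out the connectivity step that the paper states in one line.
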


\begin{proof}
For each incidence $x\in e$, write
\[
p_{x,e}:=\frac{w_e}{d_x^w}.
\]
Then
\[
\sum_{e\in F}q_e^w
=\sum_{x\in V(F)}\sum_{\substack{e\in F\\e\ni x}}p_{x,e}
=\sum_{x\in V(F)}\left(1-\sum_{\substack{e\ni x\\e\notin F}}p_{x,e}\right)
=|V(F)|-L_{H_F}(w).
\]
This proves \eqref{eq:edge-set-q}. Since all weights are positive,
$L_{H_F}(w)=0$ exactly when no edge of $E\setminus F$ is incident with a vertex of $V(F)$. If $G$ is connected and $F\ne\varnothing$, this is possible exactly when $F=E$.
\end{proof}

The preceding upper bound is optimal. The next lemma determines the exact value of
\[
\sup_{w>0}\min_{e\in E}q_e^w.
\]

\begin{lemma}[Fractional Hall threshold]
\label{lem:fractional-hall}
Let $G=(V,E)$ be a finite connected graph with $E\ne\varnothing$. Then
\begin{equation}
\label{eq:hall-threshold}
\sup_{w>0}\min_{e\in E}q_e^w
=
\min_{\varnothing\ne F\subseteq E}\frac{|V(F)|}{|F|}
=
\frac2{\mad(G)}.
\end{equation}
\end{lemma}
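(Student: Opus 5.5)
Writing $\rho:=\min_{\varnothing\ne F\subseteq E}|V(F)|/|F|$, the plan is to prove the two equalities separately: first that $\rho=2/\mad(G)$, then that $\sup_w\min_e q_e^w=\rho$ by an upper bound from \cref{lem:edge-set-q} and a lower bound from a fractional Hall-type construction.

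For the identity $\rho=2/\mad(G)$, note that every edge set $F$ spans the subgraph $H_F=(V(F),F)$ with $|V(H_F)|=|V(F)|$. Conversely, for a subgraph $H$ with $E(H)\ne\varnothing$, taking $F=E(H)$ gives $V(F)\subseteq V(H)$, so $|V(F)|/|F|\le |V(H)|/|E(H)|$. Subgraphs without edges have density $0$ and never realize $\mad(G)>0$. Hence $\max_H 2|E(H)|/|V(H)|=2/\rho$.

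The upper bound on the supremum is immediate. For any $w>0$ and any nonempty $F$, \cref{lem:edge-set-q} gives
\[
|F|\min_{e\in E}q_e^w\le\sum_{e\in F}q_e^w\le|V(F)|,
\]
so $\min_e q_e^w\le |V(F)|/|F|$. Minimizing over $F$ yields $\sup_w\min_e q_e^w\le\rho$.

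For the lower bound we reinterpret $q_e^w$ as incidence mass. Each vertex $x$ distributes total mass $1$ among its incident edges via $p_{x,e}=w_e/d_x^w$, and $q_e^w=p_{x,e}+p_{y,e}$. Conversely, suppose we have nonnegative $p_{x,e}$ with $\sum_{e\ni x}p_{x,e}\le1$ and $p_{x,e}+p_{y,e}\ge\rho$ for all $e=xy$. This is a fractional orientation problem. By max-flow/min-cut, or equivalently the fractional Hall condition for the bipartite graph between $E$ (demand $\rho$) and $V$ (capacity $1$), such $p$ exists iff $\rho|F|\le|N(F)|=|V(F)|$ for every $F$, which is exactly the definition of $\rho$.

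The main obstacle is turning such an abstract allocation into an actual positive weight $w$. We need $p_{x,e}=w_e/d_x^w$, and this has the rigid form that $p_{x,e}/p_{x,e'}=w_e/w_{e'}$ is the same at both endpoints. The plan is to avoid realizing $p$ exactly and instead approximate it, using that only $\sup$ is claimed. First, try the natural ansatz: choose weights scaling like $w_e=\varepsilon^{k(e)}$ according to a nested densest-subgraph decomposition. Set $V_1\subset V_2\subset\cdots$ with $G[V_1]$ a densest core, iterated by peeling. Take edges inside deeper (denser) layers heavy, and edges joining a layer to outer ones light, so leakage from dense layers vanishes as $\varepsilon\to0$. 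Then $\min_e q_e$ approaches the minimum over layers of the layer's own balanced threshold. Within each layer, which is strictly balanced relative to the quotient, positive realizing weights exist. For this one would invoke Lin--Liu's existence result for strictly balanced graphs, or solve directly the fixed-point system $q_e^w=c$ by a Brouwer/Perron-type argument.

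As an alternative if this gets messy, I would argue by continuity and compactness. Maximize $\min_e q_e$ over the closed simplex of nonnegative weights, interpreting $q$ on the support graph, and show by perturbation that the optimum value equals $\rho$. Then approximate by positive weights with a tiny $\varepsilon$ on zero-weight edges, checking that these edges still receive $q_e\ge\rho-o(1)$. The latter check is the delicate point. It forces the layered, multi-scale choice of $\varepsilon$-powers rather than a single $\varepsilon$, so I expect this approximation step to be the real work.
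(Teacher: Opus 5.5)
Your reduction $\rho=2/\mad(G)$, the upper bound from \cref{lem:edge-set-q}, and the max-flow description of fractional allocations are correct and agree with the paper. The gap is the lower bound: you never produce, for $\lambda<\rho$, a positive weight with $\min_e q_e^w>\lambda$, and neither suggested route supplies one.

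The layered ansatz leans on Lin--Liu. Their theorem, as cited in the paper, concerns constant-curvature (equivalently constant-$q$) weights on strictly balanced graphs of girth at least $6$. The pieces of a densest-subgraph peeling are not such graphs. Asymptotically, an edge from a layer into the heavier inner part receives incidence mass only from its outer endpoint. So each layer poses a relative realization problem of the same kind as the original one. The pieces also need not be strictly balanced. Take a cycle with a pendant path $v_0v_1v_2$ attached at $v_0$. If $V_1=V$, the graph is not strictly balanced, since the cycle has the same density. If $V_1$ is the cycle, the next layer $\{v_0v_1,v_1v_2\}$ has the same relative density as its sublayer $\{v_0v_1\}$. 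The unspecified ``Brouwer/Perron-type argument'' would have to solve exactly this realization problem. The compactness alternative presupposes that the optimum over the closed simplex equals $\rho$, which is the claim itself.

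The missing idea is to create slack and then realize a strictly positive allocation by matrix scaling. Since only the supremum is asserted, fix $\lambda<\lambda'<\rho$. Max-flow gives an allocation $r_{x,e}$ with column sums $\lambda'$ and row sums at most $1$. Top up unused vertex capacity and mix with $u_{x,e}=1/d_x$. Because of the gap $\lambda'-\lambda$, the result is strictly positive on every incidence, has row sums exactly $1$, and has column sums $c_e>\lambda$.

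At level $\rho$ itself zeros can be forced. In the example above, the cycle vertices must send all their mass to cycle edges, so every allocation has $p_{v_0,v_0v_1}=0$. This is what pushed you toward multi-scale constructions.

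Given such an array, the paper maximizes the entropy $-\sum_{e\in E}\sum_{x\in e}s_{x,e}\log s_{x,e}$ over incidence arrays with the same row and column sums. The maximizer is strictly positive, so the Lagrange conditions give the product form $s_{x,e}=a_xb_e$; this is equivalent to a Sinkhorn-type scaling of the incidence pattern. Setting $w_e:=b_e$, the row-sum equations force $a_x=1/d_x^w$, hence $q_e^w=c_e>\lambda$. This gives one positive weight for each $\lambda<\rho$. The multi-scale degeneration you anticipated occurs only in the limit $\lambda\uparrow\rho$ and never needs to be analyzed.
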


\begin{proof}
Set
\[
\rho(G):=\min_{\varnothing\ne F\subseteq E}\frac{|V(F)|}{|F|}.
\]
By \cref{lem:edge-set-q}, for every positive edge weight $w$ and every nonempty $F\subseteq E$,
\[
\min_{e\in E}q_e^w
\le \frac1{|F|}\sum_{e\in F}q_e^w
\le \frac{|V(F)|}{|F|}.
\]
Hence
\begin{equation}
\label{eq:hall-upper}
\sup_{w>0}\min_{e\in E}q_e^w\le\rho(G).
\end{equation}

For the reverse inequality, fix $0<\lambda<\rho(G)$ and choose
\[
\lambda<\lambda'<\rho(G).
\]
Consider the bipartite incidence graph with left vertex set $E$ and right vertex set $V$, where an edge-node $e$ is adjacent to its two endpoints. Consider the flow network obtained by adjoining a source $s$ and a sink $t$
to the incidence bipartite graph, with capacities
\[
c(s,e)=\lambda',\qquad
c(e,x)=M\ \ (x\in e),\qquad
c(x,t)=1,
\]
where $M>\lambda'|E|$.
For every nonempty $F\subseteq E$, the assumption
\[
\lambda'|F|<|V(F)|
\]
implies that every $s$--$t$ cut has capacity at least $\lambda'|E|$. Indeed, if $F\subseteq E$ denotes the set of edge-nodes lying on the source side of the cut, then all vertices in $V(F)$ must also lie on the source side; otherwise the cut would contain an arc of capacity $M>\lambda'|E|$. Hence the cut has capacity at least
\[
\lambda'(|E|-|F|)+|V(F)|
\ge
\lambda'(|E|-|F|)+\lambda'|F|
=
\lambda'|E|.
\]

Hence the max-flow--min-cut theorem
\cite[Theorem~10.3]{Schrijver2003}
gives a flow of value $\lambda'|E|$.
Writing $r_{x,e}$ for the flow from $e$ to $x$, we obtain
\[
\sum_{x\in e}r_{x,e}=\lambda'
\qquad(e\in E),\ \text{and}\ \sum_{e\ni x}r_{x,e}\le1
\qquad(x\in V).
\]
If $\sum_{e\ni x}r_{x,e}<1$ for some vertex $x$, assign the remaining amount $1-\sum_{e\ni x}r_{x,e}$ to any one edge incident with $x$. Repeating this for every vertex, we may assume
\[
\sum_{e\ni x}r_{x,e}=1
\qquad(x\in V),
\]
while the edge sums can only increase. Hence
\[
c_e:=\sum_{x\in e}r_{x,e}\ge\lambda'>\lambda
\qquad(e\in E).
\]
To make all incidence coordinates positive, choose any strictly positive array $u=(u_{x,e})_{x\in e}$ with
\[
\sum_{e\ni x}u_{x,e}=1
\qquad(x\in V);
\]
for instance, one may take $u_{x,e}=1/d_x$. For sufficiently small $\varepsilon>0$, replace $r$ by
\[
r^{(\varepsilon)}_{x,e}
:=
(1-\varepsilon)r_{x,e}+\varepsilon u_{x,e}.
\]
Then
\[
r^{(\varepsilon)}_{x,e}>0
\qquad(x\in e),
\]
the row sums remain equal to $1$, and, since the original edge sums satisfy $c_e>\lambda$ for every $e$, the perturbed edge sums are still strictly larger than $\lambda$ when $\varepsilon$ is sufficiently small. Relabeling $r^{(\varepsilon)}$ as $r$, we may therefore assume that
\[
r_{x,e}>0
\qquad(x\in e),\ \text{and}\ c_e:=\sum_{x\in e}r_{x,e}>\lambda
\qquad(e\in E).
\]
Now fix these row and column sums and consider all incidence arrays $s=(s_{x,e})_{x\in e}$ satisfying
\[
s_{x,e}\ge0,
\qquad
\sum_{e\ni x}s_{x,e}=1,
\qquad
\sum_{x\in e}s_{x,e}=c_e.
\]
It contains the strictly positive point $r$. Maximize the strictly concave entropy
\[
\mathcal H(s):=-\sum_{e\in E}\sum_{x\in e}s_{x,e}\log s_{x,e}
\]
over this polytope, with the convention $0\log0:=0$. An entropy maximizer is strictly positive. Indeed, let $s^\ast$ be a maximizer and suppose that $s^\ast_{x,e}=0$ for some incidence $x\in e$. Since $r$ is a strictly positive feasible point, for $0<\varepsilon<1$ the convex combination
\[
s^{(\varepsilon)}:=(1-\varepsilon)s^\ast+\varepsilon r
\]
is again feasible and satisfies $s^{(\varepsilon)}_{x,e}>0$. Since
\[
\frac{d}{dt}\bigl(-t\log t\bigr)
=-(\log t+1)\longrightarrow+\infty
\qquad\text{as }t\downarrow0,
\]
the entropy gain in the zero coordinate dominates the first-order changes in the coordinates that are already positive. Hence
\[
\mathcal H(s^{(\varepsilon)})>\mathcal H(s^\ast)
\]
for all sufficiently small $\varepsilon>0$, a contradiction. Since the entropy maximizer $s$ is strictly positive, the nonnegativity constraints are inactive at $s$. Introduce Lagrange multipliers $\alpha_x$ for the row constraints and $\beta_e$ for the column constraints, and define the Lagrangian
\[
\mathcal L(s,\alpha,\beta)
:=
\mathcal H(s)
+\sum_{x\in V}\alpha_x
\left(
\sum_{e\ni x}s_{x,e}-1
\right)
+\sum_{e\in E}\beta_e
\left(
\sum_{x\in e}s_{x,e}-c_e
\right).
\]
At the maximizing point, the first-order optimality condition gives
\[
\frac{\partial\mathcal L}{\partial s_{x,e}}
=
-(\log s_{x,e}+1)+\alpha_x+\beta_e
=0
\qquad(x\in e).
\]
Hence
\[
\log s_{x,e}
=
\alpha_x+\beta_e-1,
\]
and therefore
\[
s_{x,e}
=
e^{\alpha_x-1}e^{\beta_e}.
\]
Writing
\[
a_x:=e^{\alpha_x-1},
\qquad
b_e:=e^{\beta_e},
\]
we obtain
\[
s_{x,e}=a_xb_e
\qquad(x\in e),
\]
with $a_x,b_e>0$.
Define $w_e:=b_e$. The row-sum equations give
\[
1=a_x\sum_{e\ni x}b_e=a_xd_x^w,
\]
so $a_x=1/d_x^w$. Therefore, for $e=xy$,
\[
q_e^w
=b_e(a_x+a_y)
=\sum_{z\in e}s_{z,e}
=c_e
>\lambda.
\]
Thus
\[
\sup_{w>0}\min_{e\in E}q_e^w\ge\lambda.
\]
Letting $\lambda\uparrow\rho(G)$ and combining this with \eqref{eq:hall-upper}, we obtain
\[
\sup_{w>0}\min_{e\in E}q_e^w=\rho(G).
\]

Finally,
\[
\frac1{\rho(G)}
=\max_{\varnothing\ne F\subseteq E}\frac{|F|}{|V(F)|}=\max_{\varnothing\ne H\subseteq G}\frac{|E(H)|}{|V(H)|}
=\frac12\mad(G).
\]
\end{proof}

\begin{remark}[Why the entropy maximization appears] \label{rem:entropy-factorization} The entropy maximization is used to obtain the multiplicative structure required to recover an edge weight. A positive incidence array with the prescribed row and column sums need not have the form \[ s_{x,e}=\frac{w_e}{d_x^w}=a_xb_e. \] After fixing these row and column sums, maximizing \[ \mathcal H(s) =-\sum_{e\in E}\sum_{x\in e}s_{x,e}\log s_{x,e} \] gives, through the Lagrange multiplier equations, \[ s_{x,e}=a_xb_e. \] Taking \(w_e=b_e\), the row-sum equations then give \(a_x=1/d_x^w\). Thus the entropy step serves precisely to convert the incidence array into one induced by a positive edge weight. \end{remark}

\begin{proof}[Proof of \cref{thm:intro-high-girth}]
Since $\girth(G)\ge6$, \eqref{eq:local-high-girth} gives
\[
\Kmax(G)
=2\sup_{w>0}\min_{e\in E}q_e^w-2.
\]
By \cref{lem:fractional-hall},
\[
\sup_{w>0}\min_{e\in E}q_e^w=\frac2{\mad(G)},
\]
which proves \eqref{eq:intro-high-girth}.

A subgraph maximizing $|E(H)|/|V(H)|$ may be chosen connected, since the density of a disconnected graph is a weighted average of the densities of its components. For connected $H$,
\[
|V(H)|-|E(H)|=1-\beta(H),
\]
so \eqref{eq:intro-euler} follows.
\end{proof}

\subsection{Attainment and densest cores}

Recall from the Introduction that strict balancedness is the classical density condition of \cite[Section~4.1]{Bollobas2001}. We shall use its equivalent induced-subgraph form
\begin{equation}
\label{eq:strict-balanced}
\frac{|E(G[\Omega])|}{|\Omega|}
<\frac{|E|}{|V|}
\qquad
\text{for every }\varnothing\ne\Omega\subsetneq V.
\end{equation}

\begin{proof}[Proof of \cref{thm:intro-attainment}]
Suppose first that $G$ is strictly balanced. Then $G$ itself uniquely maximizes the edge--vertex density, and hence
\[
\mad(G)=\frac{2|E|}{|V|}.
\]
By \cite[Theorem~3.1]{LinLiu2026}, there exists a positive edge weight $w$ such that
\[
\kappa_{\LLY}^w(e)=\bar\kappa
:=2\left(\frac{|V|}{|E|}-1\right)
\qquad(e\in E).
\]
Using \cref{thm:intro-high-girth},
\[
\bar\kappa
=\frac{2|V|}{|E|}-2
=\frac4{\mad(G)}-2
=\Kmax(G).
\]
Thus $w$ attains $\Kmax(G)$.

Moreover, if $w'$ is any maximizing weight, then
\[
\kappa_{\LLY}^{w'}(e)\ge \Kmax(G)=\bar\kappa
\qquad(e\in E).
\]
On the other hand, the total-curvature identity \cite[Proposition~3.1]{LinLiu2026} gives
\[
\sum_{e\in E}\kappa_{\LLY}^{w'}(e)
=2(|V|-|E|)
=|E|\bar\kappa.
\]
Hence equality holds on every edge. The injectivity result \cite[Proposition~3.2]{LinLiu2026} then shows that $w'$ differs from $w$ only by a global scaling factor.

Conversely, suppose that a positive edge weight $w$ attains $\Kmax(G)$. Put
\[
t_*:=\frac2{\mad(G)}.
\]
By \eqref{eq:hall-threshold},
\[
q_e^w\ge t_*
\qquad(e\in E).
\]
Let $F\subseteq E$ be any nonempty edge set attaining maximum edge--vertex density, so
\[
\frac{|V(F)|}{|F|}=t_*.
\]
Then \cref{lem:edge-set-q} gives
\[
t_*|F|
\le\sum_{e\in F}q_e^w
=|V(F)|-L_{H_F}(w)
\le|V(F)|
=t_*|F|.
\]
Thus $L_{H_F}(w)=0$, and the equality characterization in \cref{lem:edge-set-q} yields $F=E$. Hence $E$ is the unique nonempty edge set attaining maximum density, which is equivalent to strict balancedness.
\end{proof}

A nonempty subgraph $H\subseteq G$ will be called a \emph{densest core} if
\[
\frac{2|E(H)|}{|V(H)|}=\mad(G).
\]
Note that a densest core has no isolated vertices, and hence
$V(H)=V(E(H))$. Every densest core is automatically induced on its vertex set: otherwise adding a missing edge of $G[V(H)]$ would produce a larger edge--vertex density. By \cref{thm:intro-attainment}, if $\Kmax(G)$ is not attained, then $G$ is not strictly balanced, and hence there exists a densest core $H\subsetneq G$. The next proposition describes the behavior of maximizing sequences on such a subgraph.

\begin{proposition}[Vanishing leakage from a densest core]
\label{prop:core-concentration}
Let $G$ be finite, connected, and of girth at least $6$, and let $H\subsetneq G$ be a densest core. Put
\[
t_*:=\frac{|V(H)|}{|E(H)|}=\frac2{\mad(G)}.
\]
For a positive edge weight $w$, set \[ \lambda(w):=\min_{e\in E}q_e^w. \] Then, for every positive edge weight $w$,
\begin{equation}
\label{eq:leakage-bound}
0<L_H(w)
\le |E(H)|\bigl(t_*-\lambda(w)\bigr).
\end{equation}
Consequently, if $\{w^{(j)}\}$ is a maximizing sequence for $\Kmax(G)$, then
\[
L_H(w^{(j)})\longrightarrow0
\]
and
\[
q_e^{w^{(j)}}\longrightarrow t_*
\qquad(e\in E(H)).
\]
\end{proposition}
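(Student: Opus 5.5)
The plan is to apply \cref{lem:edge-set-q} to the edge set $F:=E(H)$ and combine it with the trivial lower bound $q_e^w\ge\lambda(w)$ on each edge of $H$.

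Since a densest core has no isolated vertices, $V(H)=V(E(H))$, so $H_F=H$ and $L_{H_F}(w)=L_H(w)$. \cref{lem:edge-set-q} then gives
\[
\sum_{e\in E(H)}q_e^w=|V(H)|-L_H(w)=t_*|E(H)|-L_H(w).
\]
Each summand is at least $\lambda(w)$, so
\[
|E(H)|\lambda(w)\le t_*|E(H)|-L_H(w),
\]
which rearranges to the upper bound in \eqref{eq:leakage-bound}. For strict positivity, $H\subsetneq G$ is induced and has no isolated vertices, so $E(H)\ne E$; otherwise $V(H)=V(E)=V$ and $H=G$. Connectedness of $G$ and the equality clause of \cref{lem:edge-set-q} then force $L_H(w)>0$. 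More concretely, some edge outside $E(H)$ meets $V(H)$, and it contributes a positive term to $L_H(w)$ because all weights are positive.

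For the limits, let $\{w^{(j)}\}$ be a maximizing sequence. Because $\girth(G)\ge6$, \eqref{eq:local-high-girth} gives
\[
\kappa_{\LLY}^{w}(G)=2\lambda(w)-2,
\]
and \cref{thm:intro-high-girth} gives $\Kmax(G)=2t_*-2$. Hence $\lambda(w^{(j)})\to t_*$, and \eqref{eq:leakage-bound} yields $L_H(w^{(j)})\to0$.

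For each $e\in E(H)$ we have $q_e^{w^{(j)}}\ge\lambda(w^{(j)})$. Using the identity again,
\[
q_e^{w^{(j)}}=|V(H)|-L_H(w^{(j)})-\sum_{e'\in E(H)\setminus\{e\}}q_{e'}^{w^{(j)}}\le t_*|E(H)|-(|E(H)|-1)\lambda(w^{(j)}).
\]
The right-hand side tends to $t_*$, and so does the lower bound $\lambda(w^{(j)})$, so $q_e^{w^{(j)}}\to t_*$.

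No step looks hard. The only points needing care are the identification $H_F=H$, which uses that $H$ has no isolated vertices, and the strict positivity of the leakage, which uses that $H$ is a proper subgraph and $G$ is connected.
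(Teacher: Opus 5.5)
Your proof is correct and follows the paper's route. You apply the edge-set leakage identity to $E(H)$, bound each $q_e^w$ below by $\lambda(w)$, and combine $\kappa_{\LLY}^w(G)=2\lambda(w)-2$ with $\Kmax(G)=2t_*-2$ to get $\lambda(w^{(j)})\to t_*$. The only difference is cosmetic: in the last step you squeeze a single $q_e^{w^{(j)}}$ between $\lambda(w^{(j)})$ and $t_*|E(H)|-(|E(H)|-1)\lambda(w^{(j)})$, whereas the paper observes that the nonnegative deviations $q_e^{w^{(j)}}-\lambda(w^{(j)})$ have a sum tending to zero, which is equivalent.
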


\begin{proof}
By the exact leakage identity,
\[
\sum_{e\in E(H)}q_e^w
=|V(H)|-L_H(w).
\]
Since $q_e^w\ge\lambda(w)$,
\[
|E(H)|\lambda(w)
\le |V(H)|-L_H(w)
=|E(H)|t_*-L_H(w),
\]
which proves the upper bound in \eqref{eq:leakage-bound}. Since $H$ is proper, $G$ is connected, and the weights are positive, \cref{lem:edge-set-q} implies $L_H(w)>0$.

Because $\girth(G)\ge6$, \eqref{eq:local-high-girth} gives
\[
\kappa_{\LLY}^{w^{(j)}}(G)=2\lambda(w^{(j)})-2.
\]
For a maximizing sequence, \cref{thm:intro-high-girth} therefore implies $\lambda(w^{(j)})\to t_*$, and hence $L_H(w^{(j)})\to0$. Finally,
\[
\sum_{e\in E(H)}\bigl(q_e^{w^{(j)}}-\lambda(w^{(j)})\bigr)
=|V(H)|-L_H(w^{(j)})-|E(H)|\lambda(w^{(j)})\longrightarrow0.
\]
Every summand is nonnegative, so each tends to zero.
\end{proof}

\begin{remark} For a densest core $H\subsetneq G$, every maximizing sequence has vanishing leakage from $H$, and the curvatures on the edges of $H$ converge to $\Kmax(G)$. In particular, the curvature is asymptotically constant on $E(H)$. \end{remark}

\section{Short-cycle surplus and the girth-density hierarchy}
\label{sec:short-cycle}

The local high-girth formula \eqref{eq:local-high-girth} identifies $2q_e^w-2$ as the curvature when $\girth_G(e)\ge6$. We now isolate the additional contribution created precisely by cycles of length at most $5$.

For an edge $e=xy$, recall that
\[
q_e^w=\frac{w_{xy}}{d_x^w}+\frac{w_{xy}}{d_y^w}.
\]
The quantity $2q_e^w-2$ is always a lower bound for the curvature. The difference is nonnegative and records the transport shortcuts created by triangles, quadrilaterals, and pentagons; it vanishes exactly when $\girth_G(e)\ge6$. For an edge $e\in E$, define its \emph{short-cycle surplus} by
\[
\sigma_e(w):=\kappa_{\LLY}^w(e)-\bigl(2q_e^w-2\bigr).
\]

\begin{lemma}[Short-cycle surplus]
\label{lem:intro-surplus}
For every finite weighted graph and every edge $e$,
\[
\sigma_e(w)\ge0.
\]
Moreover,
\[
\sigma_e(w)=0
\quad\Longleftrightarrow\quad
\girth_G(e)\ge6.
\]
\end{lemma}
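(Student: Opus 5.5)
We will work entirely with the limit-free formulation \eqref{eq:dual-LLY}. Write $p_z:=w_{xz}/d_x^w$ for $z\sim x$ and $r_z:=w_{yz}/d_y^w$ for $z\sim y$, so that for a $1$-Lipschitz $f$ normalized by $f(x)=0$, $f(y)=1$,
\[
\Delta_w f(x)-\Delta_w f(y)
=\sum_{z\sim x}p_z f(z)-\sum_{z\sim y}r_z\bigl(f(z)-1\bigr).
\]
The term $z=y$ in the first sum contributes $p_y\cdot 1$, and the term $z=x$ in the second contributes $r_x\cdot 1$; together these give $q_e^w$.

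\emph{Lower bound $\sigma_e(w)\ge0$.} For every other neighbor $z\sim x$ with $z\ne y$ we have $f(z)\ge f(x)-1=-1$. For every other neighbor $z\sim y$ with $z\ne x$ we have $f(z)-1\le f(y)+1-1=1$. Hence
\[
\Delta_w f(x)-\Delta_w f(y)\ge q_e^w-(1-p_y)-(1-r_x)=2q_e^w-2 .
\]
Taking the infimum gives $\kappa_{\LLY}^w(e)\ge 2q_e^w-2$, i.e. $\sigma_e(w)\ge0$.

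\emph{The case $\girth_G(e)\ge6$.} Equality is just \eqref{eq:local-high-girth}. To be self-contained we exhibit the extremal $f$. Set $f=-1$ on $A:=N(x)\setminus\{y\}$, $f=2$ on $B:=N(y)\setminus\{x\}$, $f(x)=0$, $f(y)=1$. This partial function is $1$-Lipschitz iff $d(a,b)\ge3$ for all $a\in A$, $b\in B$. Note that $a\ne b$ excludes triangles, $a\sim b$ excludes $4$-cycles, and a common neighbor of $a,b$ excludes $5$-cycles. The other pairs are fine: any $a,a'\in A$ have $d(a,a')\le2$ with equal values, and $d(a,y)=2$ with $|{-1}-1|=2$. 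McShane extension then gives an admissible $f$ attaining $2q_e^w-2$, so $\sigma_e(w)=0$.

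\emph{Converse, the main obstacle.} Suppose $e$ lies on a cycle of length $3$, $4$, or $5$. Then there are $a\in A$, $b\in B$ with $d(a,b)\le2$, namely $a=b$, $a\sim b$, or $a,b$ with a common neighbor. For any admissible $f$ this forces $f(b)-f(a)\le2$. So either $f(a)>-1$ or $f(b)<2$, and in fact the sum of the two pointwise deficits $(f(a)+1)+(2-f(b))$ is at least $1$. In the triangle case $a=b$ it is at least $3$. Since $p_a>0$ and $r_b>0$ by positivity of the weights, every admissible $f$ satisfies
\[
\Delta_w f(x)-\Delta_w f(y)\ge 2q_e^w-2+\min(p_a,r_b).
\]
The infimum is attained by compactness, so $\sigma_e(w)\ge\min(p_a,r_b)>0$. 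The delicate point is only to check carefully that each deficit enters the bound with a nonnegative coefficient and that no other term can compensate. This holds because every other term was already bounded termwise in the lower-bound step, so the deficit at $a$ (weighted by $p_a$) and at $b$ (weighted by $r_b$) simply add to $2q_e^w-2$.
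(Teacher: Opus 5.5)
Your proof is correct and follows the paper's argument. It uses the same termwise bound $f\ge-1$ on $N(x)\setminus\{y\}$ and $f\le2$ on $N(y)\setminus\{x\}$, the same extremal test function extended by McShane, and the same pair $a,b$ with $d(a,b)\le2$ to exclude equality. The one difference is that your converse is a quantitative bound, $\sigma_e(w)\ge\min(p_a,r_b)>0$, valid uniformly over admissible $f$; this makes the appeal to attainment of the infimum unnecessary, whereas the paper argues by contradiction from an attained minimizer.
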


\begin{proof}
Normalize an admissible test function in \eqref{eq:dual-LLY} by $f(x)=0$ and $f(y)=1$. For every $u\sim x$, $u\ne y$, one has $f(u)\ge-1$, while for every $v\sim y$, $v\ne x$, one has $f(v)\le2$. Using these bounds term by term in $\Delta_wf(x)-\Delta_wf(y)$ gives
\begin{equation}\label{eq:termwise-lower-bounds}
  \Delta_wf(x)-\Delta_wf(y)\ge2q_e^w-2.  
\end{equation}

Taking the infimum proves $\sigma_e(w)\ge0$.

If $\girth_G(e)\ge6$, prescribe
\[
f(x)=0,\quad f(y)=1,\quad
f=-1\text{ on }N(x)\setminus\{y\},\quad
f=2\text{ on }N(y)\setminus\{x\}.
\]
The two neighbor sets are disjoint. For
$u\in N(x)\setminus\{y\}$ and $v\in N(y)\setminus\{x\}$, the path $u-x-y-v$ has length $3$, while a shorter $u$--$v$ path would create a cycle through $e$ of length at most $5$. Thus $d(u,v)=3$, so the prescription is $1$-Lipschitz and extends to $V$ by the McShane theorem recalled above. Evaluating the dual expression gives $2q_e^w-2$ and hence $\sigma_e(w)=0$.

Conversely, suppose that $\sigma_e(w)=0$. Since the infimum in \eqref{eq:dual-LLY} is attained, there is an
admissible $f$ for which equality holds in \eqref{eq:termwise-lower-bounds}. If $e$ lies on a triangle, 
equality in \eqref{eq:termwise-lower-bounds} would force a common neighbor of $x$ and $y$ to have simultaneously the values $-1$ and $2$, a contradiction. If $e$ lies on a $4$- or $5$-cycle but no triangle, there exist $u\in N(x)\setminus\{y\}$ and $v\in N(y)\setminus\{x\}$ with $d(u,v)\le2$; equality in \eqref{eq:termwise-lower-bounds} would force $f(u)=-1$ and $f(v)=2$, again contradicting the $1$-Lipschitz condition. Hence $\sigma_e(w)=0$ exactly when $\girth_G(e)\ge6$.
\end{proof}

Summing the decomposition in \cref{lem:intro-surplus} over $E$ and using \cref{lem:edge-set-q} with $F=E$ gives the useful global identity
\begin{equation}
\label{eq:euler-surplus}
\sum_{e\in E}\kappa_{\LLY}^w(e)
=2(|V|-|E|)+\sum_{e\in E}\sigma_e(w).
\end{equation}
When $\girth(G)\ge6$, all surplus terms vanish and this reduces to the total-curvature identity of \cite[Proposition~3.1]{LinLiu2026}.

\subsection{The girth-three endpoint}

Since graphs are simple, the condition $\girth(G)\ge3$ imposes no restriction. The endpoint $\ell=3$ nevertheless admits a useful quantitative refinement. It measures how far an edge is from being locally saturated by triangles.

For an edge $e=xy$, define its normalized \emph{triangle defect} by
\begin{equation}
\label{eq:triangle-defect}
\eta_e(w)
:=
\sum_{\substack{u\sim x,\ u\ne y\\u\not\sim y}}
\frac{w_{xu}}{d_x^w}
+
\sum_{\substack{v\sim y,\ v\ne x\\v\not\sim x}}
\frac{w_{yv}}{d_y^w}.
\end{equation}
Thus $\eta_e(w)$ is precisely the normalized incidence mass at the two endpoints of $e$ carried by neighbors that do not complete a triangle with $e$.

\begin{theorem}[Triangle-defect estimate]
\label{thm:triangle-defect}
Let $G$ be a finite connected graph, let $w$ be positive, and let $e=xy\in E$. Then
\begin{equation}
\label{eq:triangle-defect-bound}
\kappa_{\LLY}^w(e)
\le
1+\frac12 q_e^w-\frac12\eta_e(w).
\end{equation}
In particular, $\kappa_{\LLY}^w(e)\le1+q_e^w/2$.

If, moreover, $F\subseteq E$ satisfies
\[
\frac{2|F|}{|V(F)|}=\mad(G),
\]
then, with $H_F=(V(F),F)$,
one has
\begin{equation}
\label{eq:triangle-defect-concentration}
L_{H_F}(w)+\sum_{e\in F}\eta_e(w)
\le
2|F|\bigl(1+\frac1{\mad(G)}-\kappa_{\LLY}^w(G)\bigr).
\end{equation}
Consequently, if $\Kmax(G)=1+\frac1{\mad(G)}$ and $\{w^{(j)}\}$ is a maximizing sequence, then
\[
L_{H_F}(w^{(j)})\to0,
\qquad
\eta_e(w^{(j)})\to0
\quad(e\in F).
\]
\end{theorem}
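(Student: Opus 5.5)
The plan is to get \eqref{eq:triangle-defect-bound} by plugging one explicit test function into the dual formula \eqref{eq:dual-LLY}, and then to sum over $F$ and apply \cref{lem:edge-set-q}.

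\textbf{Step 1: the test function.} Write $p_{x,z}:=w_{xz}/d_x^w$, and set $a:=p_{x,y}$ and $b:=p_{y,x}$, so that $q_e^w=a+b$. Split the neighbours of $x$ other than $y$ into two groups. The first group consists of the common neighbours of $x$ and $y$ (the triangle vertices). Let $T_x$ be their total $p_{x,\cdot}$-mass. The second group consists of the non-triangle neighbours, whose mass is the $x$-part $\eta_x$ of \eqref{eq:triangle-defect}. Then $T_x=1-a-\eta_x$. Define $T_y$ and $\eta_y$ symmetrically, so that $T_y=1-b-\eta_y$ and $\eta_e(w)=\eta_x+\eta_y$. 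Prescribe $f$ on $\{x,y\}\cup N(x)\cup N(y)$ as follows:
\begin{itemize}
\item $f(x)=0$ and $f(y)=1$;
\item $f=\tfrac12$ on common neighbours of $x$ and $y$;
\item $f=0$ on non-triangle neighbours of $x$;
\item $f=1$ on non-triangle neighbours of $y$.
\end{itemize}
These sets are disjoint. All values lie in $[0,1]$, and distinct vertices are at distance at least $1$, so the prescription is $1$-Lipschitz. By the McShane extension theorem it extends to an admissible $f\in\operatorname{Lip}(1)$ with $f(y)-f(x)=1$.

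\textbf{Step 2: evaluate the dual expression.} Expanding, with the first sum over neighbours $u\neq y$ of $x$ and the second over neighbours $v\neq x$ of $y$,
\[
\Delta_wf(x)-\Delta_wf(y)
=a+b+\sum_{u}p_{x,u}f(u)-\sum_{v}p_{y,v}\bigl(f(v)-1\bigr).
\]
Each common neighbour contributes $\tfrac12p_{x,\cdot}$ to the first sum and $\tfrac12p_{y,\cdot}$ to the second. Non-triangle neighbours contribute $0$ to both. The value is therefore
\[
q_e^w+\tfrac12(T_x+T_y)=q_e^w+\tfrac12\bigl(2-q_e^w-\eta_e(w)\bigr)=1+\tfrac12q_e^w-\tfrac12\eta_e(w).
\]
Since $\kappa_{\LLY}^w(e)$ is the infimum in \eqref{eq:dual-LLY}, this value is an upper bound for it, which gives \eqref{eq:triangle-defect-bound}. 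The weaker bound $\kappa_{\LLY}^w(e)\le1+q_e^w/2$ follows because $\eta_e(w)\ge0$.

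\textbf{Step 3: the concentration estimate.} Sum \eqref{eq:triangle-defect-bound} over $e\in F$. On the left use $\kappa_{\LLY}^w(e)\ge\kappa_{\LLY}^w(G)$. On the right use \cref{lem:edge-set-q}, which gives $\sum_{e\in F}q_e^w=|V(F)|-L_{H_F}(w)$. This yields
\[
|F|\,\kappa_{\LLY}^w(G)\le|F|+\tfrac12|V(F)|-\tfrac12L_{H_F}(w)-\tfrac12\sum_{e\in F}\eta_e(w).
\]
Substituting $|V(F)|=2|F|/\mad(G)$ and rearranging gives \eqref{eq:triangle-defect-concentration}.

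\textbf{Step 4: maximizing sequences.} Suppose $\Kmax(G)=1+1/\mad(G)$ and $\{w^{(j)}\}$ is a maximizing sequence. Then the right-hand side of \eqref{eq:triangle-defect-concentration} tends to $0$. Every term on the left is nonnegative, so $L_{H_F}(w^{(j)})\to0$ and $\eta_e(w^{(j)})\to0$ for each $e\in F$.

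The only real choice in the argument is the value $\tfrac12$ on common neighbours: it is exactly what the Lipschitz constraints to both $x$ and $y$ allow, and it balances the two sides. Choosing $0$ and $1$ on non-triangle neighbours avoids any distance condition between $N(x)$ and $N(y)$, such as $4$-cycles. Everything else is bookkeeping.
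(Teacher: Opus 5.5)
Your proposal is correct and follows essentially the same route as the paper's proof. You use the same test function ($\tfrac12$ on common neighbours of $x$ and $y$, $0$ and $1$ on the non-triangle neighbours of $x$ and $y$, extended by McShane), evaluate the dual expression in the same way, and derive the concentration estimate by summing over $F$ and applying the edge-set leakage identity.
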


\begin{proof}
Put $C_e:=N(x)\cap N(y)$ and normalize $f(x)=0$, $f(y)=1$ in \eqref{eq:dual-LLY}. Prescribe
\[
f(z)=\frac12\quad(z\in C_e),
\]
\[
f(u)=0
\quad
\bigl(u\in N(x)\setminus(C_e\cup\{y\})\bigr),
\qquad
f(v)=1
\quad
\bigl(v\in N(y)\setminus(C_e\cup\{x\})\bigr).
\]
All prescribed values lie in $[0,1]$, so the prescription is $1$-Lipschitz and extends to $V$ by the McShane theorem recalled above.

Write
\[
p:=\frac{w_{xy}}{d_x^w},
\qquad
r:=\frac{w_{xy}}{d_y^w},
\]
and
\[
A:=\sum_{z\in C_e}\frac{w_{xz}}{d_x^w},
\qquad
B:=\sum_{z\in C_e}\frac{w_{yz}}{d_y^w}.
\]
The dual expression at this test function equals
\[
p+r+\frac{A+B}{2}.
\]
Since
\[
\eta_e(w)=2-(p+r)-(A+B),
\]
this is exactly $1+(p+r)/2-\eta_e(w)/2$, proving \eqref{eq:triangle-defect-bound}.

For the global estimate, \eqref{eq:triangle-defect-bound} gives, for every $e\in F$,
\[
2\bigl(\kappa_{\LLY}^w(G)-1\bigr)
\le q_e^w-\eta_e(w).
\]
Summing over $F$ and using \cref{lem:edge-set-q} gives
\[
2|F|\bigl(\kappa_{\LLY}^w(G)-1\bigr)
\le
|V(F)|-L_{H_F}(w)-\sum_{e\in F}\eta_e(w).
\]
Since $2|F|/|V(F)|=\mad(G)$, rearranging yields \eqref{eq:triangle-defect-concentration}. The final assertion follows immediately.
\end{proof}

If $e$ lies on no triangle, then $\eta_e(w)=2-q_e^w$, so \eqref{eq:triangle-defect-bound} reduces to
\[
\kappa_{\LLY}^w(e)\le q_e^w.
\]

We next derive the remaining local girth bounds.

\begin{lemma}[Local girth bound]
\label{lem:local-girth}
Let $e=xy\in E$ and let $\ell\in\{4,5,6\}$. If $\girth_G(e)\ge\ell$, then
\begin{equation}
\label{eq:local-girth}
\kappa_{\LLY}^w(e)
\le
4-\ell+\frac{\ell-2}{2}q_e^w.
\end{equation}
\end{lemma}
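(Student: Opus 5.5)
The plan is to prove \eqref{eq:local-girth} by exhibiting, for each $\ell$, one explicit admissible test function in the limit-free formula \eqref{eq:dual-LLY}. Every such function gives an upper bound for the infimum, which is all we need. Throughout, write $p:=w_{xy}/d_x^w$ and $r:=w_{xy}/d_y^w$, so that $q_e^w=p+r$. The normalized masses on $N(x)\setminus\{y\}$ and $N(y)\setminus\{x\}$ are then $1-p$ and $1-r$.

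We look for test functions with $f(x)=0$, $f(y)=1$, a constant value $a$ on $N(x)\setminus\{y\}$, and a constant value $b$ on $N(y)\setminus\{x\}$. Whenever $\girth_G(e)\ge4$, these two neighbor sets are disjoint, so the prescription is well defined. A direct computation then gives
\[
\Delta_wf(x)-\Delta_wf(y)=q_e^w+(1-p)a-(1-r)(b-1).
\]
The prescription must be $1$-Lipschitz on the set where it is defined; McShane's theorem then extends it to all of $V$. So it remains, for each $\ell$, to choose $a,b$ that make this expression equal to the right-hand side of \eqref{eq:local-girth} and to verify the Lipschitz condition.

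\emph{Case $\ell=6$.} Here the local high-girth formula \eqref{eq:local-high-girth} already gives $\kappa_{\LLY}^w(e)=2q_e^w-2=4-6+\tfrac{4}{2}q_e^w$, so there is nothing to prove.

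\emph{Case $\ell=4$.} The edge lies on no triangle, and we take $a=0$ and $b=1$. All prescribed values lie in $[0,1]$, so the prescription is trivially $1$-Lipschitz. The expression above reduces to $q_e^w=4-4+\tfrac{2}{2}q_e^w$. This is the same as the observation after \cref{thm:triangle-defect} that the triangle-defect bound reduces to $\kappa_{\LLY}^w(e)\le q_e^w$ when $\eta_e(w)=2-q_e^w$.

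\emph{Case $\ell=5$.} Take $a=-\tfrac12$ and $b=\tfrac32$. The expression becomes
\[
q_e^w-\tfrac12(1-p)-\tfrac12(1-r)=\tfrac32q_e^w-1,
\]
which is the claimed bound $4-5+\tfrac32q_e^w$.

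The only real work in the $\ell=5$ case is the Lipschitz check, and this is the step where the hypothesis on cycles enters.
\begin{itemize}[leftmargin=*]
\item Values within $N(x)\setminus\{y\}$ are equal, and likewise within $N(y)\setminus\{x\}$, so these pairs cause no difficulty.
\item Any $u\in N(x)\setminus\{y\}$ is not adjacent to $y$, since there is no triangle through $e$. Hence $d(u,y)=2\ge|1-(-\tfrac12)|$, and symmetrically $d(v,x)=2\ge\tfrac32$ for $v\in N(y)\setminus\{x\}$.
\item The pairs $(x,u)$ and $(y,v)$ differ by $\tfrac12\le1$.
\item For $u\in N(x)\setminus\{y\}$ and $v\in N(y)\setminus\{x\}$ we need $d(u,v)\ge b-a=2$. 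We have $u\ne v$ because there is no triangle through $e$. An edge $uv$ would close the $4$-cycle $x\,y\,v\,u$ through $e$, which the hypothesis $\girth_G(e)\ge5$ excludes. Hence $d(u,v)\ge2$.
\end{itemize}
A distance of exactly $2$, via a common neighbor of $u$ and $v$, corresponds to a $5$-cycle through $e$. This is allowed, and it is why the gap $b-a$ cannot exceed $2$ here. Extending by McShane and inserting this test function into \eqref{eq:dual-LLY} gives $\kappa_{\LLY}^w(e)\le\tfrac32q_e^w-1$, which completes the case.
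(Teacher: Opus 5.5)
Your proposal is correct and is essentially the paper's proof: the paper uses the same test function, $f=(4-\ell)/2$ on $N(x)\setminus\{y\}$ and $f=(\ell-2)/2$ on $N(y)\setminus\{x\}$, for all $\ell\in\{4,5,6\}$ at once. It checks the Lipschitz condition through $d(u,v)\ge\ell-3$ and extends by McShane. Your only deviation is treating $\ell=6$ by citing \eqref{eq:local-high-girth}, which is legitimate since \cref{lem:intro-surplus} proves that identity with exactly the $\ell=6$ instance $a=-1$, $b=2$ of this construction; your $\ell=5$ Lipschitz check, including the pairs $(u,y)$ and $(v,x)$, is more explicit than the paper's.
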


\begin{proof}
Normalize $f(x)=0$, $f(y)=1$. Prescribe
\[
f(u)=\frac{4-\ell}{2}
\quad(u\in N(x)\setminus\{y\}),
\]
\[
f(v)=\frac{\ell-2}{2}
\quad(v\in N(y)\setminus\{x\}).
\]
Since $\ell\ge4$, the two neighbor sets are disjoint. The condition $\girth_G(e)\ge\ell$ gives $d(u,v)\ge\ell-3$ for points on opposite sides, while the difference of the two prescribed values is exactly $\ell-3$. Thus the prescription is $1$-Lipschitz and extends to $V$ by the McShane theorem recalled above.

Evaluating the dual expression on this test function gives
\[
\Delta_wf(x)-\Delta_wf(y)
=4-\ell+\frac{\ell-2}{2}q_e^w.
\]
Taking the infimum proves the claim.
\end{proof}

\begin{proof}[Proof of \cref{thm:intro-girth-density}]
Choose a nonempty edge set $F$ with
\[
\frac{2|F|}{|V(F)|}=\mad(G).
\]
By \cref{lem:edge-set-q}, some $e_0\in F$ satisfies
\[
q_{e_0}^w\le\frac{|V(F)|}{|F|}=\frac2{\mad(G)}.
\]
If $\ell=3$, then \cref{thm:triangle-defect} gives
\[
\kappa_{\LLY}^w(G)
\le\kappa_{\LLY}^w(e_0)
\le1+\frac12q_{e_0}^w
\le1+\frac1{\mad(G)}.
\]
For $\ell\in\{4,5,6\}$, apply \cref{lem:local-girth} to $e_0$. Taking the supremum over $w$ proves the bound. The case $\ell=6$ is \cref{thm:intro-high-girth}.
\end{proof}

In particular, if $G$ is finite and connected with $\girth(G)\ge5$ and $\delta(G)\ge3$, then
\[
\Kmax(G)\le\frac3{\mad(G)}-1\le0.
\]
Hence no positive edge weighting has strictly positive Lin--Lu--Yau curvature on every edge in this regime.

In the following proposition, the girth assumption is required only for the edges of the $d$-regular subgraph $H$. The leakage $L_H(w)$ is as in \eqref{eq:leakage}.

\begin{proposition}[Leakage bound for regular cores]
\label{prop:regular-core}
Let $\ell\in\{4,5,6\}$, let $G$ be finite, and let $H\subseteq G$ be a $d$-regular subgraph with $E(H)\ne\varnothing$. Assume that $\girth_G(e)\ge\ell$ for every $e\in E(H)$. Define
\[
k_{\ell,d}:=4-\ell+\frac{\ell-2}{d}.
\]
Then
\begin{equation}
\label{eq:regular-core-bound}
\kappa_{\LLY}^w(G)
\le
k_{\ell,d}
-\frac{\ell-2}{2|E(H)|}L_H(w).
\end{equation}
Consequently, if $\Kmax(G)=k_{\ell,d}$ and $\{w^{(j)}\}$ is a maximizing sequence, then $L_H(w^{(j)})\to0$.
\end{proposition}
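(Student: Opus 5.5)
The plan is to combine the local girth bound of \cref{lem:local-girth} with the edge-set leakage identity of \cref{lem:edge-set-q}, applied to $F=E(H)$, and average over the edges of $H$.

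Since $H$ is $d$-regular with $E(H)\ne\varnothing$, we have $d\ge1$. Every vertex of $H$ is incident with an edge of $H$, so $V(E(H))=V(H)$. The handshake identity gives $|V(H)|=2|E(H)|/d$. Put $F=E(H)$, so that $H_F=H$ (the subgraph on $V(H)$ with edge set $E(H)$). Then \cref{lem:edge-set-q} yields
\[
\sum_{e\in E(H)}q_e^w=\frac{2|E(H)|}{d}-L_H(w).
\]

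For each $e\in E(H)$ the hypothesis $\girth_G(e)\ge\ell$ lets us apply \cref{lem:local-girth}. Combined with $\kappa_{\LLY}^w(G)\le\kappa_{\LLY}^w(e)$, this gives
\[
\kappa_{\LLY}^w(G)\le 4-\ell+\frac{\ell-2}{2}q_e^w.
\]
The lemma is edge-local: its proof uses only the distances between neighbors of $x$ and $y$, so the girth condition is needed only on $E(H)$. Averaging over the $|E(H)|$ edges of $H$ and substituting the identity above gives
\[
\kappa_{\LLY}^w(G)\le 4-\ell+\frac{\ell-2}{2|E(H)|}\Bigl(\frac{2|E(H)|}{d}-L_H(w)\Bigr)=k_{\ell,d}-\frac{\ell-2}{2|E(H)|}L_H(w),
\]
which is \eqref{eq:regular-core-bound}.

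For the consequence, apply the bound to $w^{(j)}$ and rearrange:
\[
0\le L_H(w^{(j)})\le \frac{2|E(H)|}{\ell-2}\bigl(k_{\ell,d}-\kappa_{\LLY}^{w^{(j)}}(G)\bigr).
\]
If $\Kmax(G)=k_{\ell,d}$ and $\{w^{(j)}\}$ is maximizing, the right-hand side tends to $0$. Since $\ell\ge4$ we have $\ell-2>0$, so $L_H(w^{(j)})\to0$.

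There is no real obstacle. The points to check are that the proof of \cref{lem:local-girth} indeed uses the girth hypothesis only at the edge $e$ itself, and that no isolated-vertex issue arises, which holds because regularity with $d\ge1$ gives $V(E(H))=V(H)$.
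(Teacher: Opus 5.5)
Your proposal is correct and follows the paper's proof: apply \cref{lem:local-girth} on each edge of $H$, sum over $E(H)$ using the leakage identity of \cref{lem:edge-set-q} with $F=E(H)$, then divide using $|E(H)|=d|V(H)|/2$. The consequence for maximizing sequences is the same rearrangement. Your check that $V(E(H))=V(H)$, so that $H_F=H$, is a detail the paper leaves implicit.
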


\begin{proof}
By \cref{lem:local-girth}, for $e\in E(H)$,
\[
\kappa_{\LLY}^w(G)
\le4-\ell+\frac{\ell-2}{2}q_e^w.
\]
Summing over $E(H)$ and using the exact leakage identity
\[
\sum_{e\in E(H)}q_e^w=|V(H)|-L_H(w)
\]
gives
\[
|E(H)|\kappa_{\LLY}^w(G)
\le
|E(H)|(4-\ell)
+\frac{\ell-2}{2}\bigl(|V(H)|-L_H(w)\bigr).
\]
Since $|E(H)|=d|V(H)|/2$, division gives \eqref{eq:regular-core-bound}. The final assertion follows immediately by applying this inequality to a maximizing sequence.
\end{proof}

\section{Sharpness at girth three, four, and five}
\label{sec:equality}

We first consider the equality case for girth $3$.

\begin{theorem}[Rigidity at girth three]
\label{thm:girth3-rigidity}
Let $G$ be finite and connected with $E\ne\varnothing$. The universal upper bound
\[
\Kmax(G)\le1+\frac1{\mad(G)}
\]
is attained by a positive weight if and only if $G$ is complete. In particular, for $G=K_n$,
\[
\Kmax(K_n)=\frac{n}{n-1},
\]
and every maximizing weight is constant up to global scaling.
\end{theorem}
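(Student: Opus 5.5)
We prove the two directions separately. The \emph{only if} direction uses the equality cases in \cref{thm:triangle-defect} and \cref{lem:edge-set-q}; the \emph{if} direction is a direct computation for $K_n$.

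\textbf{Only if.} Suppose a positive weight $w$ satisfies $\kappa_{\LLY}^w(G)=1+1/\mad(G)$. Choose a densest edge set $F$ with $2|F|/|V(F)|=\mad(G)$. Then the right-hand side of \eqref{eq:triangle-defect-concentration} vanishes, so
\[
L_{H_F}(w)=0
\qquad\text{and}\qquad
\eta_e(w)=0\quad(e\in F).
\]
Since $G$ is connected and $w>0$, the equality case of \cref{lem:edge-set-q} forces $F=E$. Because all weights are positive, $\eta_e(w)=0$ means that every neighbor of $x$ other than $y$ is adjacent to $y$, and vice versa. In other words, for every edge $xy$ we have $N(x)\setminus\{y\}=N(y)\setminus\{x\}$, so $N[x]=N[y]$. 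Closed neighborhoods are therefore constant along edges. By connectedness they are constant on all of $V$, hence equal to $V$, and $G$ is complete.

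\textbf{If.} For $G=K_n$ we have $\mad(K_n)=n-1$, so the upper bound is $1+1/(n-1)=n/(n-1)$. It remains to exhibit the constant weight as an attaining weight. With $w\equiv1$, every transition probability equals $1/(n-1)$, and all other vertices are common neighbors of $x$ and $y$. For a $1$-Lipschitz $f$ with $f(x)=0$ and $f(y)=1$, write $S=\sum_{z\ne x,y}f(z)$. Then
\[
\Delta f(x)=\frac{1+S}{n-1},
\qquad
\Delta f(y)=\frac{-(n-1)+S}{n-1},
\]
where the second identity uses $f(x)-f(y)=-1$ together with the $n-2$ terms $f(z)-1$. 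Hence
\[
\Delta f(x)-\Delta f(y)=\frac{n}{n-1},
\]
independently of $f$. Thus $\kappa_{\LLY}^1(e)=n/(n-1)$ on every edge, and the constant weight attains the bound. This proves $\Kmax(K_n)=n/(n-1)$ and the \emph{if} direction.

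\textbf{Rigidity of maximizers on $K_n$.} This is the main obstacle. Let $w$ be any maximizing weight on $K_n$. Since $E$ is the unique densest set, the concentration inequality gives only $L=0$ and $\eta=0$, which hold trivially on $K_n$; it carries no information about $w$. We instead use the stronger pointwise consequence. By \eqref{eq:triangle-defect-bound}, for every edge $e$,
\[
\kappa_{\LLY}^w(e)\le 1+\tfrac12 q_e^w,
\]
so maximality forces $q_e^w\ge 2/(n-1)$ for all $e$. On the other hand,
\[
\sum_{e\in E}q_e^w=|V|=n=\frac{2}{n-1}\,|E|,
\]
so equality holds: $q_e^w=2/(n-1)$ for every edge.

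The condition $q_e^w=\text{const}$ alone does not force $w$ to be constant, so more of the curvature must be used. We test the dual formula with a sharper function. Order the common neighbors $z$ and set $f(z)\in\{0,1\}$ according to whether $w_{xz}/d_x^w$ is smaller or larger than $w_{yz}/d_y^w$. This gives
\[
\kappa_{\LLY}^w(e)\le p+r+\sum_z\min\Bigl(\frac{w_{xz}}{d_x^w},\frac{w_{yz}}{d_y^w}\Bigr),
\]
where $p=w_{xy}/d_x^w$ and $r=w_{xy}/d_y^w$ as in the proof of \cref{thm:triangle-defect}. The minimum term is at most the average $\tfrac12(A+B)$, with equality only if the two normalized rows coincide off $\{x,y\}$. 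Maximality therefore forces
\[
\frac{w_{xz}}{d_x^w}=\frac{w_{yz}}{d_y^w}\qquad\text{for all } z\ne x,y.
\]
Combined with $p+r=2/(n-1)$, this should yield $d_x^w=d_y^w$ and then $w$ constant. The remaining step is an algebraic check: summing the proportionality relations over $z$ and comparing with the constraint on $q_e^w$.
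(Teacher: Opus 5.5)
Your proposal is correct and follows the paper's proof essentially step by step. The only-if direction uses \eqref{eq:triangle-defect-concentration} and \cref{lem:edge-set-q} exactly as the paper does, with your closed-neighborhood argument in place of the paper's shortest-path argument. The rigidity part forces $q_e^w=2/(n-1)$ and then $w_{xz}/d_x^w=w_{yz}/d_y^w$, and your $\{0,1\}$-valued test function is another form of the paper's observation that the dual expression is affine in each free value $f(z)\in[0,1]$ and is minimized at the interior point $1/2$.

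The algebraic check you leave open goes through. Summing these relations over $z\ne x,y$ gives $1-w_{xy}/d_x^w=1-w_{xy}/d_y^w$, so $d_x^w$ is independent of $x$ on $K_n$. Then $q_e^w=2/(n-1)$ forces every $w_e$ to equal $d_x^w/(n-1)$; the case $n=2$ is trivial.
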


\begin{proof}
Suppose first that a positive weight $w$ attains the upper bound. Choose a nonempty edge set $F$ with
\[
\frac{|F|}{|V(F)|}=\frac{\mad(G)}2.
\]
Then \eqref{eq:triangle-defect-concentration} has zero right-hand side, so
\[
L_{H_F}(w)=0,
\qquad
\eta_e(w)=0
\quad(e\in F).
\]
By the equality characterization in \cref{lem:edge-set-q}, positivity and connectedness imply $F=E$. Hence $\eta_e(w)=0$ for every edge $e=xy$. Thus, for every edge $xy$, every neighbor of $x$ other than $y$ is adjacent to $y$, and conversely. If $G$ were not complete, there would exist a shortest path $v_0v_1\cdots v_k$ with $k\ge2$. Since $v_2\sim v_1$ and $v_0\sim v_1$, the preceding property implies $v_0\sim v_2$, a contradiction. Hence $G$ is complete.

Conversely, let $G=K_n$. For the unit weight, we have
\[
\kappa_{\LLY}(e)=\frac{n}{n-1}
\qquad(e\in E).
\]
Since $\mad(K_n)=n-1$, the universal bound is attained.

It remains to prove uniqueness. Let $w$ be a maximizing weight on $K_n$. Since $\eta_e(w)=0$ for every edge, \cref{thm:triangle-defect} gives \[ \kappa_{\LLY}^w(e)\le1+\frac12q_e^w. \] Together with \[ \sum_{e\in E}q_e^w=|V| \] and $\kappa_{\LLY}^w(G)=n/(n-1)$, this forces \[ q_e^w=\frac{2}{n-1} \] and equality in the triangle-defect estimate for every edge. For $e=xy$, equality in the triangle-defect estimate means that the choice \[ f(z)=\frac12\qquad(z\ne x,y) \] minimizes the dual expression. On $K_n$, each $f(z)$ may vary independently in $[0,1]$, and the dual
expression is affine in $f(z)$ with coefficient
\[
\frac{w_{xz}}{d_x^w}-\frac{w_{yz}}{d_y^w}.
\]
Since equality in the triangle-defect estimate is attained at
$f(z)=1/2$, an interior point of $[0,1]$, this coefficient must vanish. Hence \[ \frac{w_{xz}}{d_x^w} = \frac{w_{yz}}{d_y^w} \qquad \text{for all distinct }x,y,z. \] For $n\ge3$, denote this common value, for fixed $z$, by $c_z$. Then \[ 1=\sum_{z\ne x}\frac{w_{xz}}{d_x^w}=\sum_{z\ne x}c_z\qquad(x\in V), \] so all $c_z$ are equal. For fixed $x$ and distinct $y,z\ne x$, \[ \frac{w_{xz}}{d_x^w} = c_z = c_y = \frac{w_{xy}}{d_x^w}, \] so $w_{xz}=w_{xy}$. Thus all edges incident with the same vertex have the same weight. Since $K_n$ is connected, all edge weights are equal. The case $n=2$ is immediate. Thus the maximizing weight is unique up to global scaling.
\end{proof}

We next consider the equality cases for girth $4$ and $5$. Let $e=xy$ satisfy $\girth_G(e)\ge4$, and set
\[
X_e:=N(x)\setminus\{y\},
\qquad
Y_e:=N(y)\setminus\{x\},
\]
and, for a positive weight $w$,
\begin{equation}\label{eq:a_ub_v}
  a_u:=\frac{w_{xu}}{d_x^w}\quad(u\in X_e),
\qquad
b_v:=\frac{w_{yv}}{d_y^w}\quad(v\in Y_e).  
\end{equation}
Recall that
\[
\sigma_e(w)=\kappa_{\LLY}^w(e)-\bigl(2q_e^w-2\bigr).
\]

\begin{lemma}[Local surplus formula]
\label{lem:matching-surplus}
Let $e=xy\in E$ satisfy $\girth_G(e)\ge4$. Then
\begin{equation}
\label{eq:surplus-primal}
\sigma_e(w)
=
\min\left\{
\sum_{u\in X_e}a_ug_u+\sum_{v\in Y_e}b_vh_v:
\begin{array}{l}
g_u,h_v\ge0,\\
g_u+h_v\ge3-d(u,v)
\end{array}
\right\}.
\end{equation}
Equivalently,
\begin{equation}
\label{eq:surplus-dual}
\sigma_e(w)
=
\max_{\gamma\ge0}
\sum_{u\in X_e,v\in Y_e}\bigl(3-d(u,v)\bigr)\gamma_{uv},
\end{equation}
subject to
\begin{equation}
\label{eq:gamma-capacity}
\sum_{v\in Y_e}\gamma_{uv}\le a_u,
\qquad
\sum_{u\in X_e}\gamma_{uv}\le b_v.
\end{equation}
\end{lemma}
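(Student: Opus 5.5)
The plan is to reduce the Münch--Wojciechowski dual formula \eqref{eq:dual-LLY} to a finite linear program in the values of the test function on $X_e\cup Y_e$, and then apply LP duality.

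\emph{Change of variables.} Normalize $f(x)=0$ and $f(y)=1$, and write $p:=w_{xy}/d_x^w$ and $r:=w_{xy}/d_y^w$, so that $q_e^w=p+r$ and $\sum_{u\in X_e}a_u=1-p$, $\sum_{v\in Y_e}b_v=1-r$. Since $\girth_G(e)\ge4$, the sets $X_e$ and $Y_e$ are disjoint, and $d(u,v)\ge1$ for $u\in X_e$, $v\in Y_e$. Substitute
\[
f(u)=-1+g_u\quad(u\in X_e),\qquad f(v)=2-h_v\quad(v\in Y_e).
\]
A direct expansion gives
\[
\Delta_wf(x)-\Delta_wf(y)=p+\sum_u a_uf(u)+r-\sum_v b_v\bigl(f(v)-1\bigr)=2q_e^w-2+\sum_u a_ug_u+\sum_v b_vh_v.
\]
Only the values of $f$ on $\{x,y\}\cup X_e\cup Y_e$ enter this expression. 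By the McShane extension theorem, $\sigma_e(w)$ is therefore the minimum of $\sum_u a_ug_u+\sum_v b_vh_v$ over all $(g,h)$ for which the prescription on these vertices is $1$-Lipschitz.

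\emph{Main step: show the Lipschitz constraints reduce to $g_u,h_v\ge0$ and $g_u+h_v\ge 3-d(u,v)$.} Necessity is immediate: $|f(u)-f(x)|\le1$ forces $g_u\ge0$, $|f(v)-f(y)|\le1$ forces $h_v\ge0$, and $f(v)-f(u)\le d(u,v)$ is exactly $g_u+h_v\ge3-d(u,v)$. So the LP in \eqref{eq:surplus-primal} is a relaxation, and its value is at most $\sigma_e(w)$.

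For the reverse inequality I will show the relaxed LP has an optimal point that is genuinely admissible. Fix any feasible $h\ge0$, and replace $h_v$ by $\min(h_v,2)$; this keeps feasibility because $3-d(u,v)\le2$. For that $h$, the optimal $g$ may be taken to be
\[
g_u=\max\Bigl(0,\max_{v\in Y_e}\bigl(3-d(u,v)-h_v\bigr)\Bigr)\in[0,2],
\]
and symmetrically for $h$. Neither replacement increases the objective, since all coefficients are nonnegative.

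I then check every remaining Lipschitz constraint.
\begin{itemize}[leftmargin=*]
\item Since $g_u,h_v\in[0,2]$, we have $f(u)\in[-1,1]$ and $f(v)\in[0,2]$.
\item This gives $f(u)-f(v)\le1\le d(u,v)$.
\item It also gives $|f(u)-f(y)|\le2=d(u,y)$ and $|f(v)-f(x)|\le2$, using that there are no common neighbours.
\item Two vertices $u,u'\in X_e$ always satisfy $d(u,u')\le2$, so only adjacent pairs need checking. For those, the triangle inequality $d(u,v)\le d(u',v)+1$ applied inside the max-formula gives $|g_u-g_{u'}|\le1$.
\end{itemize}
Hence the optimizer corresponds to an admissible $f$, which proves \eqref{eq:surplus-primal}.

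\emph{Dual form.} The primal LP is feasible and bounded below by $0$, so strong LP duality applies. Its dual has one variable $\gamma_{uv}\ge0$ for each covering constraint $g_u+h_v\ge 3-d(u,v)$, the dual objective $\sum(3-d(u,v))\gamma_{uv}$, and the dual constraints coming from the columns of $g_u$ and $h_v$, which are exactly \eqref{eq:gamma-capacity}. This yields \eqref{eq:surplus-dual}.

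The step I expect to need the most care is the reduction of the within-side Lipschitz constraints. Edges inside $X_e$ or inside $Y_e$ are allowed by the hypothesis, since $\girth_G(e)\ge4$ only excludes triangles through $e$ itself, so these constraints must be handled through the max-formula choice of $g$ and $h$.
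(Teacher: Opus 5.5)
Your proposal is correct and follows essentially the same route as the paper: the substitution $g_u=f(u)+1$, $h_v=2-f(v)$ gives $\sigma_e(w)\ge B$; the max-formula tightening of an optimal $(g,h)$ (first $g$ from $h$, then $h$ from $g$, with values in $[0,2]$) yields an admissible test function via McShane; and LP duality gives \eqref{eq:surplus-dual}. The differences are cosmetic. Your preliminary cap $h_v\mapsto\min(h_v,2)$ is harmless but unnecessary, since the max-formula already lands in $[0,2]$. You check the within-side Lipschitz condition only on adjacent pairs, whereas the paper notes directly that a maximum of $1$-Lipschitz functions of $u$ is $1$-Lipschitz.
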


\begin{proof} Let $A:=\sigma_e(w)$ and let $B$ denote the right-hand side of \eqref{eq:surplus-primal}. We prove $A=B$. First, let $f\in\operatorname{Lip}(1)$ satisfy $f(x)=0$ and $f(y)=1$, and set \[ g_u:=f(u)+1, \qquad h_v:=2-f(v). \] Since $u\sim x$ and $v\sim y$, one has $g_u,h_v\ge0$. Moreover, \[ \Delta_wf(x)-\Delta_wf(y) = 2q_e^w-2+ \sum_{u\in X_e}a_ug_u+ \sum_{v\in Y_e}b_vh_v. \] For $u\in X_e$ and $v\in Y_e$, the Lipschitz condition gives \[ 3-g_u-h_v=f(v)-f(u)\le d(u,v), \] and hence \[ g_u+h_v\ge3-d(u,v). \] Thus $(g,h)$ is feasible for $B$. Taking the infimum over $f$ gives \[ A\ge B. \] Conversely, let $(g,h)$ be a minimizer for $B$. Since $\girth_G(e)\ge4$, the sets $X_e$ and $Y_e$ are disjoint, and \[ 1\le d(u,v)\le3 \qquad(u\in X_e,\ v\in Y_e). \] Define \[
\widehat g_u
:=
\max\Bigl(\{0\}\cup
\{3-d(u,v)-h_v:v\in Y_e\}\Bigr).
\]
Then $\widehat g\le g$, and $(\widehat g,h)$ is feasible. For each fixed $v\in Y_e$, the function
\[
u\longmapsto 3-d(u,v)-h_v
\]
is $1$-Lipschitz. Hence
$u\mapsto\widehat g_u$, being the pointwise maximum of these functions
and the constant function $0$, is also $1$-Lipschitz. Next define
\[
\widehat h_v
:=
\max\Bigl(\{0\}\cup
\{3-d(u,v)-\widehat g_u:u\in X_e\}\Bigr).
\] Then $\widehat h\le h$, $(\widehat g,\widehat h)$ is feasible, and $v\mapsto\widehat h_v$ is $1$-Lipschitz. Moreover, \[ 0\le\widehat g_u,\widehat h_v\le2. \] Since all coefficients in the objective are nonnegative, $(\widehat g,\widehat h)$ is also a minimizer for $B$. Now define \[ f(x)=0,\qquad f(y)=1, \] and \[ f(u)=-1+\widehat g_u, \qquad f(v)=2-\widehat h_v. \] The preceding properties give the Lipschitz conditions within $X_e$ and within $Y_e$. For $u\in X_e$ and $v\in Y_e$, \[ f(v)-f(u) =3-\widehat g_u-\widehat h_v \le d(u,v), \] while \[ f(u)-f(v) =\widehat g_u+\widehat h_v-3 \le1\le d(u,v). \] The bounds $0\le\widehat g_u,\widehat h_v\le2$ give the remaining Lipschitz conditions involving $x$ and $y$. Hence this local assignment is $1$-Lipschitz and extends to $V$ by the McShane theorem. A direct computation gives \[ \Delta_wf(x)-\Delta_wf(y)-(2q_e^w-2)=B, \] so $A\le B$. Therefore $A=B$, proving \eqref{eq:surplus-primal}. The dual formula \eqref{eq:surplus-dual}--\eqref{eq:gamma-capacity}
follows from linear-programming duality
\cite[Theorem~5.4, Eq.~(5.11)]{Schrijver2003}.
\end{proof}

In \eqref{eq:surplus-dual}, we call $3-d(u,v)$ the \emph{profit} of $(u,v)\in X_e\times Y_e$. A pair at distance $1$ corresponds to a $C_4$ through $e$ and has profit $2$, a pair at distance $2$ corresponds to a $C_5$ through $e$ and has profit $1$, and pairs at distance $3$ have profit zero.

For $\ell\in\{4,5\}$ and an edge $e=xy$ with $\girth_G(e)\ge\ell$, define the auxiliary bipartite graph $\mathcal M_\ell(e)$ with parts $X_e$ and $Y_e$ by
\[
u\sim_{\mathcal M_\ell(e)}v
\quad\Longleftrightarrow\quad
d(u,v)=\ell-3.
\]
Thus an auxiliary edge corresponds exactly to a cycle of length $\ell$ through $e$.

We shall use the following standard consequence of the
max-flow--min-cut theorem
\cite[Theorem~10.3]{Schrijver2003}.

\begin{lemma}[Capacitated Hall criterion]
\label{lem:capacitated-hall}
Let $\mathcal M$ be a bipartite graph with parts $X$ and $Y$, and let
$(\alpha_u)_{u\in X}$ and $(\beta_v)_{v\in Y}$ be nonnegative numbers satisfying
\[
\sum_{u\in X}\alpha_u=\sum_{v\in Y}\beta_v.
\]
Then the following are equivalent:
\begin{enumerate}[label=\textup{(\roman*)}]
\item
\begin{equation}
\label{eq:weighted-hall}
\sum_{u\in S}\alpha_u
\le
\sum_{v\in N_{\mathcal M}(S)}\beta_v
\qquad
\text{for every }S\subseteq X.
\end{equation}
\item There exist numbers $\gamma_{uv}\ge0$ $(u\in X,\ v\in Y)$, with
$\gamma_{uv}=0$ whenever $uv\notin E(\mathcal M)$, such that
\[
\sum_{v\in Y}\gamma_{uv}=\alpha_u
\quad(u\in X),
\qquad
\sum_{u\in X}\gamma_{uv}=\beta_v
\quad(v\in Y).
\]
\end{enumerate}
\end{lemma}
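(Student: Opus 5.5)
We prove the capacitated Hall criterion through the same flow network construction used in the proof of \cref{lem:fractional-hall}. Set $T:=\sum_{u\in X}\alpha_u=\sum_{v\in Y}\beta_v$. Adjoin a source $s$ and a sink $t$, with arcs $s\to u$ of capacity $\alpha_u$ for $u\in X$, arcs $u\to v$ of capacity $M$ for every $uv\in E(\mathcal M)$, where $M>T$, and arcs $v\to t$ of capacity $\beta_v$ for $v\in Y$. A flow of value $T$ saturates every source arc and every sink arc simultaneously. Writing $\gamma_{uv}$ for the flow on $u\to v$, with $\gamma_{uv}=0$ for non-edges, flow conservation at $u$ and at $v$ gives exactly the row and column equations in (ii). Conversely, any array $\gamma$ as in (ii) defines a feasible flow of value $T$, provided $M\ge T$, since each $\gamma_{uv}\le\alpha_u\le T$. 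So (ii) holds precisely when the maximum flow equals $T$.

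For (ii)$\Rightarrow$(i), we argue directly without the max-flow--min-cut theorem. Fix $S\subseteq X$. Since $\gamma_{uv}$ vanishes unless $v\in N_{\mathcal M}(S)$ whenever $u\in S$,
\[
\sum_{u\in S}\alpha_u=\sum_{u\in S}\sum_{v\in N_{\mathcal M}(S)}\gamma_{uv}\le\sum_{v\in N_{\mathcal M}(S)}\sum_{u\in X}\gamma_{uv}=\sum_{v\in N_{\mathcal M}(S)}\beta_v .
\]

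For (i)$\Rightarrow$(ii), we show that every $s$--$t$ cut has capacity at least $T$, and then invoke the max-flow--min-cut theorem \cite[Theorem~10.3]{Schrijver2003}. Let the source side of a cut contain $s$, the set $S\subseteq X$, and some subset of $Y$.
\begin{enumerate}[label=\textup{(\alph*)}]
\item If some $v\in N_{\mathcal M}(S)$ lies on the sink side, the cut contains an arc of capacity $M>T$, so it already has capacity at least $T$.
\item Otherwise $N_{\mathcal M}(S)$ lies entirely on the source side. The cut then contains the arcs $s\to u$ for $u\in X\setminus S$ and the arcs $v\to t$ for $v\in N_{\mathcal M}(S)$, so its capacity is at least
\[
\sum_{u\in X\setminus S}\alpha_u+\sum_{v\in N_{\mathcal M}(S)}\beta_v\ge\sum_{u\in X\setminus S}\alpha_u+\sum_{u\in S}\alpha_u=T,
\]
using (i).
\end{enumerate}
The cut $\{s\}$ has capacity exactly $T$, so the minimum cut equals $T$. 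Hence a flow of value $T$ exists, and (ii) follows by the first paragraph.

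The only delicate point is the bookkeeping in case (b): the Hall inequality must be applied to the set $S$ of $X$-vertices on the source side. The infinite-capacity device (taking $M>T$) forces $N_{\mathcal M}(S)$ onto the source side in every cut of capacity below $T$, which is what makes (i) enter. Everything else is routine, mirroring the proof of \cref{lem:fractional-hall}.
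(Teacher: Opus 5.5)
Your proof is correct and takes the same approach as the paper. The paper gives no written proof, only citing this as a standard consequence of the max-flow--min-cut theorem \cite[Theorem~10.3]{Schrijver2003}. Your network with capacities $\alpha_u$, $M$, $\beta_v$ and your two-case cut analysis supply exactly the omitted details, in the same style as the flow argument in the proof of \cref{lem:fractional-hall}.
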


\begin{lemma}[Local equality at girth four and five]
\label{lem:girth45-local-equality}
Let $w$ be a positive edge weight on $G$, and let $e=xy\in E$.
\begin{enumerate}[label=\textup{(\roman*)}]
\item If $\girth_G(e)\ge4$, then
\[
\kappa_{\LLY}^w(e)\le q_e^w.
\]
Equality holds if and only if $d_x^w=d_y^w$ and
\[
\sum_{u\in S}a_u
\le
\sum_{v\in N_{\mathcal M_4(e)}(S)}b_v
\qquad
\text{for every }S\subseteq X_e.
\]

\item If $\girth_G(e)\ge5$, then
\[
\kappa_{\LLY}^w(e)\le\frac32q_e^w-1.
\]
Equality holds if and only if $d_x^w=d_y^w$ and
\[
\sum_{u\in S}a_u
\le
\sum_{v\in N_{\mathcal M_5(e)}(S)}b_v
\qquad
\text{for every }S\subseteq X_e.
\]
\end{enumerate}
\end{lemma}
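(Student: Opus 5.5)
We work entirely through the dual surplus formula of \cref{lem:matching-surplus}, since $\kappa_{\LLY}^w(e)=2q_e^w-2+\sigma_e(w)$. Put $p:=w_{xy}/d_x^w$ and $r:=w_{xy}/d_y^w$, so that $q_e^w=p+r$. The capacities then satisfy $\sum_{u\in X_e}a_u=1-p$ and $\sum_{v\in Y_e}b_v=1-r$.

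For (i), every profit $3-d(u,v)$ is at most $2$, since $d(u,v)\ge1$ for $u\in X_e$, $v\in Y_e$ when $\girth_G(e)\ge4$. A feasible $\gamma$ in \eqref{eq:gamma-capacity} has total mass $\sum\gamma_{uv}\le\min(1-p,1-r)$. Hence
\[
\sigma_e(w)\le2\min(1-p,1-r)\le(1-p)+(1-r)=2-q_e^w,
\]
which gives $\kappa_{\LLY}^w(e)\le q_e^w$. Equality forces both inequalities to be tight. The second is tight iff $p=r$, i.e.\ $d_x^w=d_y^w$. Given $p=r$, the first is tight iff some optimal $\gamma$ has total mass $1-p$ and is supported only on profit-$2$ pairs, i.e.\ on edges of $\mathcal M_4(e)$. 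Such a $\gamma$ saturates both sides, since the two totals coincide. By \cref{lem:capacitated-hall} with $\alpha=a$, $\beta=b$, this existence is equivalent to the stated weighted Hall condition. For the converse, if $d_x^w=d_y^w$ and Hall holds, \cref{lem:capacitated-hall} yields a $\gamma$ supported on $\mathcal M_4(e)$ with value $2(1-p)=2-q_e^w$. Then $\sigma_e(w)\ge2-q_e^w$, and equality follows.

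For (ii), $\girth_G(e)\ge5$ means no pair has $d(u,v)=1$. Thus every profit is at most $1$, attained exactly on edges of $\mathcal M_5(e)$. The same argument gives
\[
\sigma_e(w)\le\min(1-p,1-r)\le1-\tfrac12q_e^w,
\]
so $\kappa_{\LLY}^w(e)\le2q_e^w-2+1-\tfrac12q_e^w=\tfrac32q_e^w-1$. The equality analysis is identical: $p=r$ together with a saturating transport supported on $\mathcal M_5(e)$, which is the Hall condition by \cref{lem:capacitated-hall}.

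The main point needing care is the equality step in (i). We must note that "value $=2\cdot$mass with mass $=1-p$" forces every positive $\gamma_{uv}$ to lie on a profit-$2$ pair. Since pairs of profit $1$ or $0$ would strictly lower the value, the optimal $\gamma$ is supported on $\mathcal M_4(e)$. It is also full on both sides, because $\sum a_u=\sum b_v$ once $p=r$, and this is exactly the hypothesis needed to apply \cref{lem:capacitated-hall}.
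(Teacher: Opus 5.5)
Your proposal is correct and follows the paper's proof essentially step for step. You bound the dual surplus by the maximal profit times the total mass, $\sigma_e(w)\le 2\min\{1-p,1-r\}$ (resp.\ $\min\{1-p,1-r\}$). From equality you read off $p=r$ together with a capacity-saturating $\gamma$ supported on $\mathcal M_\ell(e)$, and you pass to and from the weighted Hall condition via the capacitated Hall criterion, exactly as the paper does.
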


\begin{proof}
Write
\[
p:=\frac{w_{xy}}{d_x^w},
\qquad
r:=\frac{w_{xy}}{d_y^w}.
\]
For a feasible $\gamma$ in \eqref{eq:surplus-dual}, put
\[
T(\gamma):=
\sum_{u\in X_e}\sum_{v\in Y_e}\gamma_{uv}.
\]
By \eqref{eq:gamma-capacity},
\begin{equation}
\label{eq:gamma-total-bound}
T(\gamma)\le\min\{1-p,1-r\}.
\end{equation}
The feasible set in \eqref{eq:surplus-dual} is compact, so the maximum
is attained.

Suppose first that $\girth_G(e)\ge4$. Since every profit is at most
$2$, \cref{lem:matching-surplus} and \eqref{eq:gamma-total-bound} give
\[
\sigma_e(w)
\le
2\min\{1-p,1-r\}
\le
2-p-r.
\]
Therefore
\[
\kappa_{\LLY}^w(e)
=
2(p+r)-2+\sigma_e(w)
\le
p+r
=
q_e^w.
\]

Assume that equality holds, and let $\gamma$ be an optimizer in
\eqref{eq:surplus-dual}. Equality above implies
\[
p=r,
\qquad
T(\gamma)=1-p=1-r,
\]
and every pair with $\gamma_{uv}>0$ must have profit $2$. Hence
$d(u,v)=1$, so $\gamma$ is supported on $\mathcal M_4(e)$. Since
\[
\sum_{u\in X_e}a_u
=
\sum_{v\in Y_e}b_v
=
1-p,
\]
the equality $T(\gamma)=1-p$ implies that every capacity constraint in
\eqref{eq:gamma-capacity} is saturated. Thus, by \cref{lem:capacitated-hall},
\[
\sum_{u\in S}a_u
\le
\sum_{v\in N_{\mathcal M_4(e)}(S)}b_v
\qquad(S\subseteq X_e).
\]
Also, $p=r$ is equivalent to $d_x^w=d_y^w$.

Conversely, suppose that $d_x^w=d_y^w$ and the displayed Hall
inequalities hold. Then $p=r$ and
\[
\sum_{u\in X_e}a_u
=
1-p
=
1-r
=
\sum_{v\in Y_e}b_v.
\]
By \cref{lem:capacitated-hall}, there is a feasible $\gamma$ supported
on $\mathcal M_4(e)$ that saturates all capacities. Every edge of
$\mathcal M_4(e)$ has profit $2$. By \eqref{eq:surplus-dual}, we have
\[
\sigma_e(w)
\ge
2T(\gamma)
=
2(1-p).
\]
Together with the upper bound, this gives
$\kappa_{\LLY}^w(e)=q_e^w$. This proves \textup{(i)}.

Now suppose that $\girth_G(e)\ge5$. Then no pair has profit $2$, so
every positive profit is $1$. Hence
\[
\sigma_e(w)
\le
\min\{1-p,1-r\}
\le
1-\frac{p+r}{2},
\]
and therefore
\[
\kappa_{\LLY}^w(e)
\le
\frac32(p+r)-1=\frac32q_e^w-1.
\]

If equality holds, an optimizer $\gamma$ satisfies
\[
p=r,
\qquad
T(\gamma)=1-p=1-r,
\]
and every pair with $\gamma_{uv}>0$ has profit $1$. Hence $d(u,v)=2$,
so $\gamma$ is supported on $\mathcal M_5(e)$ and saturates all
capacities. Therefore
\[
\sum_{u\in S}a_u
\le
\sum_{v\in N_{\mathcal M_5(e)}(S)}b_v
\qquad(S\subseteq X_e).
\]

Conversely, if $d_x^w=d_y^w$ and these Hall inequalities hold, then
\cref{lem:capacitated-hall} gives a feasible $\gamma$ supported on
$\mathcal M_5(e)$ that saturates all capacities. Since every edge of
$\mathcal M_5(e)$ has profit $1$,
\[
\sigma_e(w)
\ge
T(\gamma)
=
1-p.
\]
Together with the upper bound, this gives
\[
\kappa_{\LLY}^w(e)=\frac32q_e^w-1.
\]
This proves \textup{(ii)}.
\end{proof}

\begin{theorem}[Rigidity at girth four and five]
\label{thm:girth45-rigidity}
Let $\ell\in\{4,5\}$ and let $G$ be finite and connected with
$\girth(G)\ge\ell$. Then the bound
\[
\Kmax(G)
\le
4-\ell+\frac{\ell-2}{\mad(G)}
\]
is attained by a positive weight if and only if $G$ is $d$-regular
for some $d$ and $\mathcal M_\ell(e)$ has a perfect matching for every
$e\in E$. In that case
\[
\Kmax(G)=4-\ell+\frac{\ell-2}{d},
\]
and every maximizing weight is constant up to global scaling.
\end{theorem}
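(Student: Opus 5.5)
The proof splits into an attainment direction and a converse direction, and both reduce to \cref{lem:girth45-local-equality} combined with the leakage identity of \cref{lem:edge-set-q}. Write $c_\ell:=\frac{\ell-2}{2}$. \cref{lem:girth45-local-equality} gives, for every edge,
\[
\kappa_{\LLY}^w(e)\le 4-\ell+c_\ell q_e^w .
\]

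\emph{Attainment implies the structure.} Suppose a positive $w$ attains $4-\ell+(\ell-2)/\mad(G)$. Pick a densest edge set $F$, so that $|V(F)|/|F|=2/\mad(G)$. Summing the local bound over $F$ and using \cref{lem:edge-set-q} gives
\[
|F|\Bigl(4-\ell+\tfrac{\ell-2}{\mad(G)}\Bigr)\le |F|(4-\ell)+c_\ell\bigl(|V(F)|-L_{H_F}(w)\bigr).
\]
The right-hand side equals the left-hand side minus $c_\ell L_{H_F}(w)$. Hence $L_{H_F}(w)=0$, so $F=E$, and every inequality in the chain is tight.

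Since $\kappa^w(G)$ is a minimum and the sum is tight, every edge satisfies $\kappa^w_{\LLY}(e)=4-\ell+c_\ell q_e^w$ and $\kappa^w_{\LLY}(e)=\kappa^w(G)$. So $q_e^w$ is constant, equal to $|V|/|E|$.

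The equality clause of \cref{lem:girth45-local-equality} then gives $d_x^w=d_y^w$ on every edge. By connectivity $d_x^w\equiv D$ for all $x$. Then $q_e^w=2w_e/D$ is constant, so $w$ is constant. With $w$ constant, $d_x^w=D$ forces every combinatorial degree to be equal, so $G$ is $d$-regular; this also proves that the maximizing weight is constant up to scaling.

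With constant weight, $a_u=b_v=1/d$, and $|X_e|=|Y_e|=d-1$. The weighted Hall inequalities in \cref{lem:girth45-local-equality} become the ordinary Hall condition $|S|\le|N_{\mathcal M_\ell(e)}(S)|$ for all $S\subseteq X_e$. By Hall's theorem (the case of \cref{lem:capacitated-hall} with equal capacities, using integrality of bipartite matching polytopes), $\mathcal M_\ell(e)$ has a perfect matching.

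\emph{The structure implies attainment.} Suppose $G$ is $d$-regular and every $\mathcal M_\ell(e)$ has a perfect matching. Take the unit weight. Then $d_x^w=d_y^w=d$, and the matching yields the Hall inequalities with $a_u=b_v=1/d$. By \cref{lem:girth45-local-equality}, every edge has curvature
\[
4-\ell+c_\ell\cdot\tfrac{2}{d}=4-\ell+\tfrac{\ell-2}{d}.
\]
A regular graph has $\mad(G)=d$, since average degree never exceeds the maximum degree. So this value equals the upper bound of \cref{thm:intro-girth-density}, which is therefore attained, and $\Kmax(G)=4-\ell+(\ell-2)/d$.

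The main subtle step is passing from the global equality to local equality on every edge, which needs both $F=E$ from vanishing leakage and a uniform $q_e$. The degree-balancing step relies on connectivity and $d^w_x=d^w_y$ along edges. The passage from fractional to integral perfect matching is the standard bipartite Hall theorem.
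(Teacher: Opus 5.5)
Your proof is correct and follows the paper's argument essentially step for step. You sum the local bound over a densest edge set $F$ and use the leakage identity to force $L_{H_F}(w)=0$, hence $F=E$ and $q_e^w\equiv 2/\mad(G)$; then the equality clause of the local lemma gives $d_x^w$ constant, hence constant weights, regularity, and a perfect matching via Hall's theorem, with the converse via the unit weight. The only differences are cosmetic: you sum the curvature inequalities directly instead of first deducing $q_e^w\ge 2/\mad(G)$ on $F$, and you read off regularity from $d_x^w\equiv D$ with constant $w$ rather than from $d_x=\mad(G)$.
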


\begin{proof}
Put
\[
t_*:=\frac2{\mad(G)},
\qquad
k_*:=4-\ell+\frac{\ell-2}{2}t_*.
\]
Suppose that a positive weight $w$ satisfies
\[
\kappa_{\LLY}^w(G)=k_*.
\]
Choose a nonempty $F\subseteq E$ such that
\[
\frac{2|F|}{|V(F)|}=\mad(G).
\]
For every $e\in F$, \cref{lem:girth45-local-equality} gives
\[
k_*
\le
\kappa_{\LLY}^w(e)
\le
4-\ell+\frac{\ell-2}{2}q_e^w,
\]
so $q_e^w\ge t_*$. Hence
\[
t_*|F|
\le
\sum_{e\in F}q_e^w
=
|V(F)|-L_{H_F}(w)
\le
|V(F)|
=
t_*|F|.
\]
Thus $L_{H_F}(w)=0$. By \cref{lem:edge-set-q}, $F=E$, and equality in
the first inequality gives
\[
q_e^w=t_*
\qquad(e\in E).
\]

Equality therefore holds in \cref{lem:girth45-local-equality} for every
$e=xy$. In particular,
\[
d_x^w=d_y^w
\qquad(x\sim y).
\]
Since $G$ is connected, there is a constant $D>0$ such that
\[
d_x^w=D
\qquad(x\in V).
\]
Using $q_e^w=t_*$,
\[
\frac{2w_{xy}}{D}=t_*,
\]
so all edge weights are equal. Moreover,
\[
1
=
\sum_{y\sim x}\frac{w_{xy}}{D}
=
\frac{d_xt_*}{2},
\]
and hence
\[
d_x=\frac2{t_*}=\mad(G)
\qquad(x\in V).
\]
Thus $G$ is $d$-regular, where $d=\mad(G)$.

For every $e=xy$,
\[
a_u=b_v=\frac1d
\qquad(u\in X_e,\ v\in Y_e).
\]
The Hall condition in \cref{lem:girth45-local-equality} therefore
becomes
\[
|S|
\le
|N_{\mathcal M_\ell(e)}(S)|
\qquad(S\subseteq X_e).
\]
Since
\[
|X_e|=|Y_e|=d-1,
\]
Hall's marriage theorem \cite{Hall1935} implies that
$\mathcal M_\ell(e)$ has a perfect matching.

Conversely, suppose that $G$ is $d$-regular and
$\mathcal M_\ell(e)$ has a perfect matching for every $e\in E$.
Take the unit weight. Then
\[
q_e=\frac2d,
\qquad
a_u=b_v=\frac1d.
\]
A perfect matching gives
\[
|S|
\le
|N_{\mathcal M_\ell(e)}(S)|
\qquad(S\subseteq X_e),
\]
and hence
\[
\sum_{u\in S}a_u
\le
\sum_{v\in N_{\mathcal M_\ell(e)}(S)}b_v.
\]
Therefore \cref{lem:girth45-local-equality} gives
\[
\kappa_{\LLY}(e)
=
4-\ell+\frac{\ell-2}{d}
\qquad(e\in E).
\]
Since $G$ is $d$-regular, $\mad(G)=d$, so this is the upper bound.
The first part of the proof also shows that every maximizing weight is
constant up to global scaling.
\end{proof}

\begin{example}[Complete bipartite graphs]
\label{ex:Kab}
For $a,b\ge1$,
\begin{equation}
\label{eq:Kab}
\Kmax(K_{a,b})
=
\frac2{\max\{a,b\}}.
\end{equation}
Indeed, let $w$ be any positive weight. For every edge $e=xy$ of $K_{a,b}$, all pairs in
$X_e\times Y_e$ have distance $1$. Hence \cref{lem:matching-surplus}
gives
\[
\sigma_e(w)=2\min\{1-p,1-r\}.
\]
Hence
\[
\kappa_{\LLY}^w(x,y)
=
2\min\left\{
\frac{w_{xy}}{d_x^w},
\frac{w_{xy}}{d_y^w}
\right\}.
\]
Assume $a\le b$ and let $x$ belong to the part of size $a$, so
$d_x=b$. Some edge $xz$ incident with $x$ satisfies
\[
\frac{w_{xz}}{d_x^w}\le\frac1b,
\]
and therefore
\[
\kappa_{\LLY}^w(G)\le\frac2b.
\]
For the unit weight,
\[
\kappa_{\LLY}(e)=\frac2b
\qquad(e\in E),
\]
so
\[
\Kmax(K_{a,b})=\frac2b.
\]
\end{example}

\begin{remark}
For $a,b\ge2$, the preceding example also shows that the girth-$4$
bound
\[\Kmax(K_{a,b})
\le
\frac{2}{\mad(K_{a,b})}
=
\frac1a+\frac1b\]
is attained if and only if $a=b$. Thus the family $K_{a,b}$ illustrates
the rigidity in \cref{thm:girth45-rigidity}: equality occurs precisely
in the regular case, while for $a\ne b$ the girth-$4$ bound is strict.
Moreover, when $a=1$, the formula reduces to
\[\Kmax(K_{1,b})=\frac2b,\]
in agreement with the tree formula \eqref{eq:intro-tree}.
\end{remark}

\section*{Acknowledgements}

The author used ChatGPT (OpenAI) as a research-assistance tool for brainstorming, exploratory searches for examples and references, and suggestions concerning exposition and LaTeX organization. The author independently checked and takes full responsibility for all mathematical statements, proofs, computations, and bibliographic information in the manuscript.

\end{document}